\numberwithin{equation}{section}
\theoremstyle{plain}
\newtheorem{theorem}{Theorem}[section]
\theoremstyle{plain}
\newtheorem{lemma}{Lemma}[section]
\newtheorem{remark}{Remark}[section]
\newtheorem{example}{Example}[section]
\newtheorem{ass}{Assumption}[section]
\newcommand{\argmax}{\mathop{\rm arg~max}\limits}
\newcommand{\argmin}{\mathop{\rm arg~min}\limits}
\providecommand{\skakko}[1]{\left(#1\right)}
\providecommand{\mkakko}[1]{\left\{#1\right\}}
\providecommand{\lkakko}[1]{\left[#1\right]}
\begin{document}

\begin{frontmatter}
\title{A test for counting sequences\\ of integer-valued autoregressive models}
\runtitle{A test for counting sequences of INAR model}

\begin{aug}
\author{\fnms{Yuichi} \snm{Goto}\thanksref{t1}\ead[label=e1]{yuichi.goto@math.kyushu-u.ac.jp}}

\address{Faculty of Mathematics, Kyushu University,
\printead{e1}}

\and
\author{\fnms{Kou} \snm{Fujimori}\thanksref{t1}\ead[label=e2]{kfujimori@shinshu-u.ac.jp}}
\address{
Department of Applied Economics, Faculty of Economics and Law, Shinshu University,
\printead{e2}}

\thankstext{t1}{This research was supported by JSPS Grant-in-Aid for Early-Career Scientists JP23K16851 (Y.G.) and JP21K13271 (K.F.).
}
\runauthor{Y.Goto and K.Fujimori}

\end{aug}

\begin{abstract}
The integer autoregressive (INAR) model is one of the most commonly used models in nonnegative integer-valued time series analysis and is a counterpart to the traditional autoregressive model for continuous-valued time series. 
To guarantee the integer-valued nature,  
the binomial thinning operator or more generally the generalized Steutel and van Harn operator is used to define the INAR model.
However, the distributions of the counting sequences used in the operators have been determined by the preference of analyst without statistical verification so far.
In this paper, we propose a test based on the mean and variance relationships for distributions of counting sequences and a disturbance process to check if the operator is reasonable. We show that our proposed test has asymptotically correct size and is consistent.
Numerical simulation is carried out to evaluate the finite sample performance of our test. 
As a real data application, we apply our test to the monthly number of anorexia cases in animals submitted to animal health laboratories in New Zealand and we conclude that binomial thinning operator is not appropriate.
\end{abstract}

\begin{keyword}[class=MSC]
\kwd[Primary ]{60G10}
\kwd{62M10}
\kwd[; secondary ]{62F12}
%62F03 Parametric hypothesis testing
%62M10 Time series, auto-correlation, regression, etc. in statistics (GARCH)
%62F12 Asymptotic properties of parametric estimators
\end{keyword}

\begin{keyword}
%\kwd{count time series}
%\kwd{counting sequence} 
\kwd{INAR model}
\kwd{non-negative integer-valued time series}
%\kwd{least squares estimator}
\kwd{thinning operator}
\end{keyword}
\tableofcontents
\end{frontmatter}

\section{Introduction}
Non-negative integer-valued time series are ubiquitous, and the analysis of this type of data has received much attention over the past two decades.  Applications of this research include the analysis of the number of infected people (\citealp{sp11}, \citealp{Xuetal23}), crimes (\citealp{cs17}, \citealp{bh19}), and so on. 
A review of recent developments in this area can be found in \cite{davisetal21}.

Integer autoregressive (INAR) models and integer-valued generalized autoregressive conditional heteroskedastic (INGARCH) models are popular models for non-negative integer-valued time series. INGARCH was introduced by \cite{flo06} and the conditional expectation of the process has autoregressive structures.
On the other hand, INAR model was proposed by \cite{mckenzie85} and \cite{aa87}, and is a natural extension of the classical autoregressive model for continuous-valued time series to count time series.
To ensure that the process takes non-negative integer values, the binomial thinning operator is often used in the model. 
The binomial thinning operator is defined as the sum of independent Bernoulli random variables called counting sequences.
\cite{latour97} extended the binomial thinning operator to the generalized Steutel and van Harn operator in multivariate settings, which allows the use of non-binomial counting sequences in the operator.
\cite{zj03} suggested an extended thinning operator by using the distribution including binomial distribution as a special case.
\cite{weib08} defined the new operator based on the convolution of the Bernoulli and the geometric distributions. 
\cite{rbn09} and \cite{yangetal19} studied the operator based on geometric and generalized Poisson distributions, respectively.
%\cite{aa91}: hypergeometric distribution to deal with bounded count time series.
 \cite{ab19}  advocated the thinning operator based on a distribution with a linear fractional probability generating function.

Many operators have been proposed so far. However, operators have been completely determined by the analyst without statistical guarantees. In this paper, we propose a test for distributions of counting sequences and a distribution of the disturbance process. To construct the test, we make use of the important feature of the non-negative integer-valued distributions that the variance of the distribution takes the form of the function of its mean. 
Note that \cite{gf23} considered a test for conditional variances based on this feature. Their setting is essentially for INGARCH models and does not include INAR models since nuisance parameters in the conditional variance do not be allowed. To encompass INAR models, in {Appendix} we consider the integer-valued time series  $\{Z_t\}$ satisfying
\begin{align}\label{general_model}
{\rm E}\skakko{Z_t\mid \mathcal F_{t-1}} = m_t(\bm\mu_0)\text{ and }
{\rm Var}\skakko{Z_t\mid \mathcal F_{t-1}} = v_t(\bm\theta_0),
\end{align}
where 
$\mathcal F_{t-1}$ is the $\sigma$-field generated by $\{Z_{s},s\leq t-1\}$, for a known function $m$ on $[0,\infty)^\infty\times\mathbb R^d$ to $(\delta,+\infty)$ for some $\delta>0$, 
$$m_t(\bm\mu_0):=m(Z_{t-1},Z_{t-2},\ldots;\bm\mu_0),$$ for a known function $v$ on $[0,\infty)^\infty\times\mathbb R^{d^\prime}$ to $(\delta,+\infty)$,  $$v_t(\bm\theta_0):=v(Z_{t-1},Z_{t-2},\ldots;\bm\theta_0),$$  and $(\bm\mu_0^\top,\bm\theta_0^\top)^\top\in\mathbb R^{d+d^\prime}$ are unknown parameters, 
and establish the strong consistency and the asymptotic normality of the M-estimator of $\bm\mu_0$ and $\bm\theta_0$.
Since INAR models defined later in \eqref{INAR} have the conditional expectation and conditional variance of $\{Z_t\}$ given $\mathcal F_{t-1}$ of the form
\begin{align}\label{mvINAR}
m_t(\bm\mu_0) =
\sum_{i=1}^p \mu_iZ_{t-i} + \mu_\epsilon
\text{ and }
v_t(\bm\theta_0)=
\sum_{i=1}^p\sigma^2_iZ_{t-i}+\sigma_\epsilon^2,
\end{align}
where $\bm\mu_0:=(\mu_1,\ldots,\mu_p,\mu_\epsilon)^\top$ and $\bm\theta_0:=(\sigma_1^2,\ldots,\sigma_p^2,
\sigma_\epsilon^2)^\top$, the above setting includes INAR models.
In this article, we consider the conditional least squares estimator with a simple form of the asymptotic variance as a special case of the M-estimator. Based on the asymptotic normality of estimators for $\bm\mu_0$ and $\bm\theta_0$, we show that our proposed test has asymptotically size $\phi$ and is consistent.

As a related work, several goodness of fit tests for integer-valued time series have been developed. For instance, tests for the intensity function of INGARCH models were considered by \cite{n11}, \cite{fn13}, and \cite{ln13}. For the distributions of INGARCH and INAR, tests relied on probability generating functions were investigated by \cite{mk14}, \cite{hhm15}, \cite{schweer16} and \cite{hhm21}. Tests based on the Pearson statistic and the Stein-Chen identity were studied by \cite{weib18}) and \cite{awj22}, respectively. A test for overdispersion and zero inflation was proposed by \cite{whp19}.
However, tests related to operators for INAR models have not been considered so far.

The remainder of this paper is organized as follows.
In Section 2, we define generalized INAR models and formulate the test for distributions of counting sequences and a distribution of the disturbance process. A test statistic is also defined.
In Section 3.1,  we show the asymptotic null distribution of the proposed test statistic and the consistency of our test.
In Section 3.2, we show the strong consistency and asymptotic normality of the conditional least squares estimators.
In Section 4, we illustrate the finite sample performance of our test.
In Section 5, we apply our test to the monthly number of skin lesions and anorexia cases in animals submitted to animal health laboratories in New Zealand.
In Appendix, we derive sufficient conditions for the strong consistency and asymptotic normality of the M-estimator.

\section{Preliminary}\label{pre}
In this section, we define a generalized integer autoregressive process, a testing problem, and a test statistic.
Let $\{Z_t\}$ follow a generalized integer autoregressive process of order $p$ (GINAR($p$)) defined as
\begin{align}\label{INAR}
Z_t:=\sum_{i=1}^p \mu_i \circ Z_{t-i} + \epsilon_t,
\end{align}
where $\circ$ is so called the generalized Steutel and van Harn operator (see \cite{latour97}) given by $\mu_i\circ Z_{t-i}:=\sum_{j=1}^{Z_{t-i}}W_{i,t,j}$, $\{W_{i,t,j}\}$ is a counting sequence, which is an i.i.d.\  (with respect to $t$ and $j$) non-negative integer-valued random variable with mean $\mu_i$ and variance $\sigma_i^2$, 
$\{W_{i,t,j}\}$ is independent of $\{W_{i^\prime,t^\prime,j^\prime}\}$ for $i\neq i^\prime$, 
and $\{\epsilon_t\}$ is an i.i.d.\ non-negative integer-valued random variable with mean $\mu_\epsilon$ and variance $\sigma_\epsilon^2$, which is independent of $\{W_{i,t,j}\}$.

To ensure the strictly stationarity of $\{Z_t\}$, we suppose that $1-\mu_1z-\mu_2z^2-\cdots-\mu_pz^p\neq0$ for all $z\in\mathbb C$ such that $|z|\leq 1$. It is also known that $\{Z_t\}$ is ergodic. See \cite{latour97}.

In this paper, we are interested in the following hypothesis testing problem:
\begin{align}\nonumber
\tilde H_0&:\ \text{$(W_{1,t,j},\ldots,W_{p,t,j},\epsilon_t)$ follows a target joint distribution}\\\label{hypo}
&\text{v.s. $\tilde K_0$: $\tilde H_0$ does not hold.}
\end{align}

Before constructing the test statistic, we present some examples of a  counting sequence $\{W_{i,t,j}\}$.
\begin{example}\label{ex}{\rm 
The most popular choice of a counting sequence $\{W_{i,t,j}\}$ might be the Bernoulli random variable. In this case, the generalized Steutel and van Harn operator reduces to the binomial thinning. 

The second example  of $\{W_{i,t,j}\}$ is the extended thinning proposed by \cite{zj03}, which is defined as the generalized Steutel and van Harn operator with $\{W_{i,t,j}\}$ following a distribution with probability generating function 
$$
\phi_{\rm ZJ}(k):=\frac{(1-\mu_{\rm ZJ})+(\mu_{\rm ZJ}-\gamma_{\rm ZJ})k}{1-\mu_{\rm ZJ}\gamma_{\rm ZJ}-(1-\mu_{\rm ZJ})\gamma_{\rm ZJ} k} \quad \text{for $\mu_{\rm ZJ}\in[0,1]$ and $\gamma_{\rm ZJ}\in[0,1)$}.
$$ See also Remark A.9 of \cite{weib08}.
The distribution has mean $\mu_{\rm ZJ}$ and variance $\mu_{\rm ZJ}(1-\mu_{\rm ZJ}){(1+\gamma_{\rm ZJ})}/{(1-\gamma_{\rm ZJ})}$. 
According to \citet[Remark in p.239 and 240]{zj03}, the random variable can be generated by $B_{\rm ZJ}G_{\rm ZJ}$, where $B_{\rm ZJ}$ and $G_{\rm ZJ}$ follow the Bernoulli distribution with the success probability $(1-\gamma_{\rm ZJ})\mu_{\rm ZJ}/(1-\gamma_{\rm ZJ}\mu_{\rm ZJ})$ and the shifted geometric distribution with a parameter $1-(1-\mu_{\rm ZJ})\gamma_{\rm ZJ}/[\{(1-\mu_{\rm ZJ})\gamma_{\rm ZJ}\}+(1-\gamma_{\rm ZJ})]$, respectively, and $B_{\rm ZJ}$ is independent of $G_{\rm ZJ}$.

The third example of $\{W_{i,t,j}\}$ is the BerG$(\pi_{\rm BW},\xi_{\rm BW})$ distribution, where $\pi_{\rm BW}$ and $\xi_{\rm BW}$ are parameters satisfying $0<\pi_{\rm BW}<1$ and $\xi_{\rm BW}>0$, investigated by \cite{bw17}. The probability generating function, the mean, and the variance  of the distribution are given by 
$\phi_{\rm BW}(s):=\{1-\pi_{\rm BW}(1-s)\}/\{1+\xi_{\rm BW}(1-s)\}$, 
$\mu_{\rm BW}:=\pi_{\rm BW}+\xi_{\rm BW}$, and 
$\mu_{\rm BW}(1-\mu_{\rm BW}+2\xi_{\rm BW})$, respectively.
The random variable $B_{\rm BW}+G_{\rm BW}$, where $B_{\rm BW}$ and $G_{\rm BW}$ follow the Bernoulli distribution with the success probability $\pi_{\rm BW}$ and the geometric distribution with a parameter $1/(1+\xi_{\rm BW})$, respectively, follows the BerG$(\pi_{\rm BW},\xi_{\rm BW})$ distribution. 
\cite{ab19} considered a different parametrization from \cite{bw17} and elucidated that the family of distributions encompasses at least nine distributions as a special case including the second example above.

%Fourth example is a distribution with the probability generating function given by $p_{\rm AB}(s):=1-m_{\rm AB}(1-s)/{\{1+r_{\rm AB}(1-s)\}}$ for $r$ and $m$ satisfying $r\geq0$ and $0<m\leq r+1$. This family of the distributions was proposed by \cite{ab19} and encompass at least nine distributions as a special case including the second and third examples above. The mean and variance of the distribution are given by $\mu_{\rm AB}:=m$ and $m_{\rm AB}(2r+1-m)$, respectively. According to \citet[Proposition 2.1]{ab19}, the random variable following this distribution can be generated by $B_{\rm AB}G_{\rm AB}$, where $B_{\rm AB}$ and $G_{\rm AB}$ follows the Bernoulli distribution with the success probability $m_{\rm AB}/(r_{\rm AB}+1)$ and the shifted geometric distribution with a parameter $1/(r_{\rm AB}+1)$, respectively, and $B_{\rm AB}$ is independent of $G_{\rm AB}$.
}
\end{example}

As mentioned in the introduction, the variances of many non-negative distributions can be written in term of their mean of the distributions. 
For example, the mean and variance of the binomial distribution with the number of trials $n=1$ and the success probability $p\in(0,1)$ take the form of $\mu_{\rm bin}:=p$, $\kappa_{\rm bin}(\mu_{\rm bin}):=\mu_{\rm bin}(1-\mu_{\rm bin})$, respectively. 
For the Poisson distribution with parameter $\lambda$, the mean and variance are given by $\mu_{\rm Pois}:=\lambda$ and $\kappa_{\rm Pois}(\mu_{\rm Pois}):=\mu_{\rm Pois}$. Similarly, for the negative binomial distribution with success probability $p$ and given parameter $r$, the mean and variance can be expressed as  $\mu_{\rm NB}:=r(1-p)/p$ and $\kappa_{NB}(\mu_{\rm NB}):=(\mu_{\rm NB}+r)\mu_{\rm NB}/r$.

 {Based on this fact, we rephrase the hypothesis defined in \eqref{hypo} as 
\begin{align}\nonumber
H_0:&\ \text{the variance of $(W_{1,t,j},\ldots,W_{p,t,j},\epsilon_t)$ takes the form of $\bm\kappa\skakko{\bm{{{\mu}}}_0}$}\\\label{hypo2}
&\quad \text{v.s. $K_0$: $H_0$ does not hold,}
\end{align}
where
$\bm\kappa\skakko{\bm{{{\mu}}}_0}:=\skakko{\kappa_1({
\mu}_1),\ldots,\kappa_p({\mu}_p),\kappa_\epsilon({\mu}_\epsilon)}^\top$ and $\kappa_1,\ldots,\kappa_p$, and $\kappa_\epsilon$ are the functions of means corresponding to variances of the target marginal distributions for $W_{1,t,j},\ldots,W_{p,t,j}$, and $\epsilon_t$, respectively.}
The hypothesis $H_0$ is a necessary condition for $\tilde H_0$.

\begin{remark} {{\rm 
In this paper, we specifically focused on the property of non-negative discrete random variables, where the variance is a function of its mean, and reframed the problem from the distributions of counting sequences to their second-order moments, leveraging the characteristic of non-negative discrete random variables.
Importantly, our proposed test does not have any power for counting processes that deviate from the target distribution but share the same first and second-order moments with the target distribution. To our best knowledge, non-negative discrete-valued random variables with separately parametrized mean and variance have not been introduced. Such variables would be highly desirable, though.}}
\end{remark}
To define a test statistic, we consider estimators $\bm{{\hat{\mu}}}_n$ and $\bm{{\hat{\theta}}}_n$ of $\bm{{{\mu}}}_0$ and $\bm{{{\theta}}}_0$ endowed with the strong consistency and the asymptotic normality with asymptotic variance $\bm V:=(\bm v_{ij})_{i,j=1,2}$, that is, 
\begin{align}\label{strong_consistent}
\begin{pmatrix}
\bm{\hat \mu}_n\\
\bm{\hat \theta}_n\\
\end{pmatrix}\rightarrow
\begin{pmatrix}
\bm\mu_0\\
\bm\theta_0\\
\end{pmatrix}\quad
\text{almost surely as $n\to\infty$}
\end{align}
and 
\begin{align}\label{asym_normal}
\sqrt n
\begin{pmatrix}
\bm{\hat \mu}_n - \bm\mu_0\\
\bm{\hat \theta}_n - \bm\theta_0\\
\end{pmatrix}\Rightarrow
N\skakko{\bm0, \bm V}
 \quad\text{as $n\to\infty$.}
\end{align}
The concrete examples of the estimators  {including the conditional least squares estimators and the M-estimators} will be presented in Section \ref{sec:ex_est}. 
Motivated by the mean and variance relationships, we consider the asymptotic distribution of $\bm\kappa(\bm{\hat{{\mu}}}_n)-\bm{\hat{{\theta}}}_n$. 
By Theorem \ref{thm3} and Proposition 6.4.3 of \cite{bd09}, we know that $\bm\kappa(\bm{{\hat{\mu}}}_n)-\bm{{\hat{\theta}}}_n$ converges in distribution to the $(p+1)$-dimensional centered normal distribution with variance
\begin{align}\label{W}
\bm W:=
\bm K \bm {v}_{11} \bm K 
-\bm K \bm {v}_{12} 
-\bm {v}_{21} \bm K +
\bm {v}_{22},
\end{align}
where $\bm K $ is a $(p+1)$-diagonal matrix whose $i$-th diagonal entry is 
$({\partial}/{\partial \mu_i})\kappa_i\skakko{\mu_i}$ for $i\in\{1,\ldots,p\}$ and $(p+1)$-th diagonal entry is $({\partial}/{\partial \mu_{\epsilon}})\kappa_\epsilon\skakko{\mu_\epsilon}$, provided that $\bm\kappa(\cdot)$ is continuously differentiable in a neighborhood of $\bm \mu_0$.
The quantities $\bm\kappa({\bm\mu}_0)$ and ${\bm\theta}_0$ correspond to the variance of $(W_{1,t,j},\ldots,W_{p,t,j},\epsilon_t)$ for the null hypothesis we wish to test and for the underlying distributions, respectively.

Thus, under the assumption that $\bm V$ and $\bm K $ are non-singular matrices we propose the following test statistic
\begin{align*}
T_n:=n\skakko{\bm\kappa\skakko{\bm{{\hat \mu}_n}}-\bm{{\hat \theta}_n}}^\top
{\widehat {\bm W}}^{-1}
\skakko{\bm\kappa\skakko{\bm{{\hat \mu}_n}}-\bm{{\hat \theta}_n}},
\end{align*}
where
$\widehat {\bm W}^{-1}$ is the inverse matrix of $\widehat{\bm W}$, $\widehat {\bm W}$ is the consistent estimator of $\bm W$. In Section \ref{sec:test}, we derive asymptotic properties of the test based on $T_n$.

\section{Main results}
\subsection{Asymptotic properties of the proposed test}\label{sec:test}

In this subsection, we derive the asymptotic distribution of $T_n$ and the consistency of the test based on $T_n$.

Based on the above discussion, the asymptotic null distribution of the test statistic is given in the following theorem.
\begin{theorem}
Suppose that estimators $\bm{{\hat{\mu}}}_n$ and $\bm{{\hat{\theta}}}_n$ of $\bm{{{\mu}}}_0$ and $\bm{{{\theta}}}_0$ satisfy the strong consistency and the asymptotic normality with asymptotic variance $\bm V:=(\bm v_{ij})_{i,j=1,2}$,
$\bm\kappa(\cdot)$ is continuously differentiable in a neighborhood of $\bm \mu_0$, 
matrices $\bm V$ and $\bm K $ are non-singular, and
$\widehat {\bm W}$ is the consistent estimator of $\bm W$.
Then, under the null hypothesis $H_0$, 
$T_n$ converges in distribution to the chi-square distribution with $p+1$ degrees of freedom as $n\to\infty$. 
\end{theorem}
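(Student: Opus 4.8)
The plan is to combine the multivariate delta method for $\bm\kappa$ at $\bm\mu_0$ with the joint asymptotic normality \eqref{asym_normal} of $(\bm{\hat\mu}_n^\top,\bm{\hat\theta}_n^\top)^\top$, and then pass to the quadratic form via Slutsky's theorem and the continuous mapping theorem. The decisive observation is that, under $H_0$, the reformulation \eqref{hypo2} says precisely $\bm\kappa(\bm\mu_0)=\bm\theta_0$, so that $\bm\kappa(\bm{\hat\mu}_n)-\bm{\hat\theta}_n$ has population value $\bm0$ and $\sqrt n\,\skakko{\bm\kappa(\bm{\hat\mu}_n)-\bm{\hat\theta}_n}$ is a properly centered, $\sqrt n$-rescaled statistic rather than a diverging quantity.

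First I would carry out the delta-method step. Since each $\kappa_i$ is continuously differentiable on a neighborhood $U$ of $\bm\mu_0$ and, by \eqref{strong_consistent}, $\bm{\hat\mu}_n\to\bm\mu_0$ almost surely, we have $\bm{\hat\mu}_n\in U$ for all large $n$, almost surely. A componentwise mean-value expansion gives $\bm\kappa(\bm{\hat\mu}_n)-\bm\kappa(\bm\mu_0)=\bm K_n\skakko{\bm{\hat\mu}_n-\bm\mu_0}$, where $\bm K_n:=\diag\skakko{\kappa_1'(\tilde\mu_{n,1}),\ldots,\kappa_p'(\tilde\mu_{n,p}),\kappa_\epsilon'(\tilde\mu_{n,\epsilon})}$ with each $\tilde\mu_{n,i}$ lying between the $i$-th components of $\bm{\hat\mu}_n$ and $\bm\mu_0$; by continuity of the derivatives and strong consistency, $\bm K_n\to\bm K$ almost surely, where $\bm K$ is the diagonal matrix in \eqref{W}. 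Writing $\bm A:=\lkakko{\bm K,\ -\bm I_{p+1}}$ and using $\bm\kappa(\bm\mu_0)=\bm\theta_0$ under $H_0$,
\begin{align*}
\sqrt n\,\skakko{\bm\kappa(\bm{\hat\mu}_n)-\bm{\hat\theta}_n}
&=\bm A\,\sqrt n\begin{pmatrix}\bm{\hat\mu}_n-\bm\mu_0\\ \bm{\hat\theta}_n-\bm\theta_0\end{pmatrix}
+\skakko{\bm K_n-\bm K}\,\sqrt n\,\skakko{\bm{\hat\mu}_n-\bm\mu_0}.
\end{align*}
The last term is $o_P(1)$ because $\bm K_n-\bm K\to\bm0$ almost surely while $\sqrt n\,\skakko{\bm{\hat\mu}_n-\bm\mu_0}=O_P(1)$ by \eqref{asym_normal}. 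Hence Slutsky's theorem and \eqref{asym_normal} yield $\sqrt n\,\skakko{\bm\kappa(\bm{\hat\mu}_n)-\bm{\hat\theta}_n}\Rightarrow\bm A\,N(\bm0,\bm V)=N(\bm0,\bm A\bm V\bm A^\top)$, and a block multiplication identifies $\bm A\bm V\bm A^\top$ with the matrix $\bm W$ of \eqref{W}.

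It remains to handle the quadratic form. First, $\bm W$ is nonsingular: $\bm V$ nonsingular means $\bm V$ is positive definite, and $\bm A$ has full row rank $p+1$ since it contains the invertible block $-\bm I_{p+1}$, so $\bm W=\bm A\bm V\bm A^\top$ is positive definite. Consistency of $\widehat{\bm W}$ for $\bm W$ together with the continuous mapping theorem (matrix inversion being continuous at the nonsingular $\bm W$) give that $\widehat{\bm W}$ is invertible with probability tending to $1$ and $\widehat{\bm W}^{-1}\to\bm W^{-1}$ in probability. Combining $\sqrt n\,\skakko{\bm\kappa(\bm{\hat\mu}_n)-\bm{\hat\theta}_n}\Rightarrow N(\bm0,\bm W)$ with $\widehat{\bm W}^{-1}\to\bm W^{-1}$ in probability, Slutsky's theorem and the continuous mapping theorem applied to $(\bm x,\bm M)\mapsto\bm x^\top\bm M\bm x$ give $T_n\Rightarrow\bm Y^\top\bm W^{-1}\bm Y$ with $\bm Y\sim N(\bm0,\bm W)$, and $\bm Y^\top\bm W^{-1}\bm Y\sim\chi^2_{p+1}$; this is essentially the situation of Proposition 6.4.3 of \cite{bd09}.

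I expect the only genuinely delicate point to be the justification of the delta-method expansion under merely local continuous differentiability of $\bm\kappa$: this is precisely why strong consistency (rather than consistency in probability) of $\bm{\hat\mu}_n$ is assumed, so that $\bm{\hat\mu}_n\in U$ eventually almost surely and the intermediate Jacobians $\bm K_n$ are well defined and converge to $\bm K$. Everything else reduces to routine invocations of Slutsky's theorem and the continuous mapping theorem; the one extra fact to record along the way is the positive definiteness of $\bm W$, which legitimizes $\widehat{\bm W}^{-1}$ and the limiting $\chi^2_{p+1}$ law.
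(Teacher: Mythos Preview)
Your proof is correct and follows essentially the same route as the paper: the paper's proof is a one-line appeal to the delta method (Proposition 6.4.3 of \cite{bd09}) applied to the joint asymptotic normality \eqref{asym_normal}, together with Slutsky's theorem for the consistent $\widehat{\bm W}$, and your argument simply spells out these steps in detail. Your explicit verification that $\bm W=\bm A\bm V\bm A^\top$ is positive definite (via full row rank of $\bm A$ and positive definiteness of the nonsingular covariance $\bm V$) is a useful addition that the paper leaves implicit.
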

\begin{proof}
The discussion in Section \ref{pre} and the assumption of this theorem yield the conclusion.
\end{proof}

\begin{remark} {\rm 
Examples of the estimators $\bm{{\hat{\mu}}}_n$ and $\bm{{\hat{\theta}}}_n$ of $\bm{{{\mu}}}_0$ and $\bm{{{\theta}}}_0$ satisfying the strong consistency and the asymptotic normality are given in Section \ref{sec:ex_est}. For the least squared estimator stated in Section \ref{sec:ex_est}, 
the moment conditions ${\rm E}W_{i,t,j}^8<\infty$ and ${\rm E}\epsilon_{t}^8<\infty$ suffice to construct the consistent estimator for  $\bm W$ (see also Remark \ref{rem_cons}).}
\end{remark}

Therefore, the test which rejects the null hypothesis $H_0$ whenever $T_n\geq \chi_{p+1}^2[1-\phi]$ has the asymptotic size $\phi$, where $\phi$ is a significance level and $\chi_{p+1}^2$ denotes the upper $\phi$-percentile of the chi-square distribution with ${p+1}$ degrees of freedom. 

The following theorem shows the consistency of the test.

\begin{theorem}
Suppose that estimators $\bm{{\hat{\mu}}}_n$ and $\bm{{\hat{\theta}}}_n$ of $\bm{{{\mu}}}_0$ and $\bm{{{\theta}}}_0$ satisfy the strong consistency and the asymptotic normality with asymptotic variance $\bm V:=(\bm v_{ij})_{i,j=1,2}$,
$\bm\kappa(\cdot)$ is continuously differentiable in a neighborhood of $\bm \mu_0$, 
matrices $\bm V$ and $\bm K $ are non-singular, and
$\widehat {\bm W}$ is the consistent estimator of $\bm W$. 
Then, the test based on $T_n$ is consistent, that is, (under the alternative hypothesis $K_0$,) the power of the test tends to one as $n\to\infty$. 
\end{theorem}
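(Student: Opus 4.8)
The plan is to show that under the alternative $K_0$ the statistic $T_n$ diverges in probability, which immediately forces the rejection probability $P_{K_0}\skakko{T_n\geq\chi^2_{p+1}[1-\phi]}$ to tend to one. First I would use the formulation of the hypotheses in \eqref{hypo2}: under $K_0$ the vector $\bm\delta:=\bm\kappa\skakko{\bm\mu_0}-\bm\theta_0$ is nonzero. Since $\bm\kappa(\cdot)$ is continuous in a neighbourhood of $\bm\mu_0$ and the estimators $\bm{\hat\mu}_n,\bm{\hat\theta}_n$ are strongly consistent by \eqref{strong_consistent}, the continuous mapping theorem gives
\[
\bm\kappa\skakko{\bm{\hat\mu}_n}-\bm{\hat\theta}_n\longrightarrow\bm\delta\neq\bm 0\qquad\text{almost surely.}
\]

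Next I would check that $\bm W$ is positive definite, so that scaling by $\widehat{\bm W}^{-1}$ cannot destroy the nonzero limit $\bm\delta$. Writing $\bm A:=\skakko{\bm K,\ -\bm I_{p+1}}$ and using that $\bm K$ is symmetric, the expression \eqref{W} becomes $\bm W=\bm A\bm V\bm A^\top$. The matrix $\bm V$, being a non-singular asymptotic covariance matrix, is positive definite, while $\bm A^\top$ is injective because its lower block is $-\bm I_{p+1}$; hence $\bm x^\top\bm W\bm x=(\bm A^\top\bm x)^\top\bm V(\bm A^\top\bm x)>0$ for every $\bm x\neq\bm 0$, so $\bm W$ is positive definite and in particular non-singular. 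Combined with the assumed consistency of $\widehat{\bm W}$ and the continuity of matrix inversion at a non-singular matrix, this yields $\widehat{\bm W}^{-1}\to\bm W^{-1}$ in probability.

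Putting the pieces together with one more continuous mapping argument,
\[
\skakko{\bm\kappa\skakko{\bm{\hat\mu}_n}-\bm{\hat\theta}_n}^\top\widehat{\bm W}^{-1}\skakko{\bm\kappa\skakko{\bm{\hat\mu}_n}-\bm{\hat\theta}_n}\longrightarrow\bm\delta^\top\bm W^{-1}\bm\delta=:c>0
\]
in probability, i.e. $T_n/n$ converges to a positive constant; hence $T_n\to+\infty$ in probability under $K_0$, and the power of the test tends to one. The only step that is not a routine limit argument is the positive definiteness of $\bm W$: this is the point at which the representation $\bm W=\bm A\bm V\bm A^\top$ together with the non-singularity of $\bm V$ (rather than merely of $\bm K$) is essential, and where I expect the main care to be needed; everything else reduces to the strong law, the continuous mapping theorem, and the assumed consistency of $\widehat{\bm W}$.
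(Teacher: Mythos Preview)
Your argument is correct and follows essentially the same route as the paper: under $K_0$ one shows, via consistency of $\bm{\hat\mu}_n$, $\bm{\hat\theta}_n$ and $\widehat{\bm W}$, that $T_n/n$ converges in probability to the strictly positive constant $\bm\delta^\top\bm W^{-1}\bm\delta$, whence $T_n\to\infty$ and the power tends to one. Your treatment is in fact more careful than the paper's at one point---you explicitly verify positive definiteness of $\bm W$ via the factorization $\bm W=\bm A\bm V\bm A^\top$, whereas the paper only invokes nonnegative-definiteness and the decomposition $\bm\kappa_0(\bm\mu_0)-\bm\theta_0=\bm\kappa_0(\bm\mu_0)-\bm\kappa_1(\bm\mu_0)$ without spelling out why the limiting quadratic form is strictly positive.
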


\begin{proof}
Denote $\bm\kappa_0$ and $\bm\kappa_1$ as the vector-valued functions of means  corresponding to variances of the target marginal distributions for $W_{1,t,j},\ldots,W_{p,t,j}$, and $\epsilon_t$ under the null hypothesis and under the alternative, respectively.
The consistency of $\widehat {\bm W}$ and $\bm{{\hat{\mu}}}_n$ and $\bm{{\hat{\theta}}}_n$ gives that, under the alternative hypothesis $K_0$,
\begin{align*}
&{\rm P}\skakko{\frac{T_n}{n}\geq\frac{\chi_{p+1}^2[1-\phi]}{n}}\\
=&
{\rm P}\Bigg(
\skakko{\bm\kappa_0\skakko{\bm{{ \mu}_0}}-\bm\kappa_1\skakko{\bm{{ \mu}_0}}+\bm\kappa_1\skakko{\bm{{ \mu}_0}}-\bm{{ \theta}_0}}^\top
{{\bm W}}^{-1}\\
&\quad\times\skakko{\bm\kappa_0\skakko{\bm{{ \mu}_0}}-\bm\kappa_1\skakko{\bm{{ \mu}_0}}+\bm\kappa_1\skakko{\bm{{ \mu}_0}}-\bm{{ \theta}_0}}\geq\frac{\chi_{p+1}^2[1-\phi]}{n}\Bigg)\\
=&
{\rm P}\Bigg(
\skakko{\bm\kappa_0\skakko{\bm{{ \mu}_0}}-\bm\kappa_1\skakko{\bm{{ \mu}_0}}}^\top
{{\bm W}}^{-1}
\skakko{\bm\kappa_0\skakko{\bm{{ \mu}_0}}-\bm\kappa_1\skakko{\bm{{ \mu}_0}}}\geq0\Bigg) +o_p(1).
\end{align*}
Here we used the relationship $\bm\kappa_1\skakko{\bm{{ \mu}_0}}=\bm{{ \theta}_0}$. 
The definition of the nonnegative-definite matrix yields 
$${\rm P}\Bigg(
\skakko{\bm\kappa_0\skakko{\bm{{ \mu}_0}}-\bm\kappa_1\skakko{\bm{{ \mu}_0}}}^\top
{{\bm W}}^{-1}
\skakko{\bm\kappa_0\skakko{\bm{{ \mu}_0}}-\bm\kappa_1\skakko{\bm{{ \mu}_0}}}\geq0\Bigg)=1.$$
\end{proof}

\subsection{Example of the estimators}\label{sec:ex_est}

In this subsection, we derive the example of the estimators $\bm{{\hat{\mu}}}_n$ and $\bm{{\hat{\theta}}}_n$ of $\bm{{{\mu}}}_0$ and $\bm{{{\theta}}}_0$ endowed with the strong consistency and the asymptotic normality.

The conditional least squares estimators for $\bm\mu$ and $\bm\theta$ are defined as
\begin{align*}
\bm{\hat \mu}_n
:=\argmin_{\bm\mu\in \bm H}\frac{1}{n}\sum_{t=1+p}^n(Z_t- \bm\mu^\top{\bm Y}_{t-1})^2
\end{align*} and
\begin{align*}
\bm{\hat \theta}_n
:=\argmin_{\bm\theta\in \bm \Theta}\frac{1}{n}\sum_{t=1+p}^n\skakko{(Z_t-\bm\mu^\top{\bm Y}_{t-1})^2-\bm{\theta}^\top{\bm Y}_{t-1}}^2,
\end{align*}
where ${\bm Y}_{t-1}:=(Y_{t-1},\ldots,Y_{t-p},1)^\top$, $\bm H$ is a compact subspace of $\bm{\mu}\in\mathbb R^{p+1}$ such that $1-\mu_1z-\mu_2z^2-\cdots-\mu_pz^p\neq0$ for all $z\in\mathbb C$ such that $|z|\leq 1$ and $\mu_i>0$ for any $i\in\{1,\ldots,p\}$ and $\bm \Theta$ is a compact subspace of $\bm\theta\in\mathbb R^{p+1}$ such that $\sigma_i^2>0$ for any $i\in\{1,\ldots,p\}$ (see \citealp{kn78}). 
The least squared estimator can be written explicitly as 
\begin{align*}
\bm{\hat \mu}_n
:=\skakko{\frac{1}{n}\sum_{t=1+p}^n{\bm Y}_{t-1}{\bm Y}_{t-1}^\top}^{-1}\frac{1}{n}\sum_{t=1+p}^n Z_t{\bm Y}_{t-1}
\end{align*}
and
\begin{align*}
\bm{\hat \theta}_n
:=\skakko{\frac{1}{n}\sum_{t=1+p}^n{\bm Y}_{t-1}{\bm Y}_{t-1}^\top}^{-1}\frac{1}{n}\sum_{t=1+p}^n \skakko{Z_t-\bm{\hat \mu}_n^\top{\bm Y}_{t-1}}^2{\bm Y}_{t-1}.
\end{align*}

The strong consistency and asymptotic normality of $\bm{\hat \mu}_n$ and $\bm{\hat \theta}_n$ are shown in the following theorem.

\begin{lemma}\label{thm3}
Suppose that ${\rm E}W_{i,t,j}^8<\infty$ and ${\rm E}\epsilon_{t}^8<\infty$.
Then, $\bm{\hat \mu}_n$ and $\bm{\hat \theta}_n$ tend almost surely to $\bm\mu_0$ and $\bm\theta_0$ as $n\to\infty$, respectively. Moreover, it holds that 
\begin{align*}
\sqrt n
\begin{pmatrix}
\bm{\hat \mu}_n - \bm\mu_0\\
\bm{\hat \theta}_n - \bm\theta_0\\
\end{pmatrix}\Rightarrow
N\skakko{\bm0, \bm{J^{-1}}\bm{I}\bm{J^{-1}}}
 \quad\text{as $n\to\infty$,}
\end{align*}
where $\bm {\mathcal O}_{p+1}$ is a $(p+1)\times(p+1)$ zero matrix, 
\begin{align*}
\bm{J}:=
\begin{pmatrix}
\bm{J}_m & \bm{\mathcal O}_{p+1} \\
\bm{\mathcal O}_{p+1} & \bm{J}_v\\
\end{pmatrix},
\text{ and }
\bm{I}:=
\begin{pmatrix}
\bm{I}_m & \bm{I}_{mv}\\
\bm{I}_{mv}^\top & \bm{I}_v\\
\end{pmatrix}
\end{align*}
with 
${\bm J_m}:=
{\rm E}\skakko{
{\bm Y}_{t-1}{\bm Y}_{t-1}^\top}$, 
${\bm J_v}:=
{\rm E}\skakko{
{\bm Y}_{t-1}{\bm Y}_{t-1}^\top}$, 
${\bm I_m}:=
{\rm E}\skakko{
\skakko{\bm{\theta}_0^\top{\bm Y}_{t-1}}
{\bm Y}_{t-1}{\bm Y}_{t-1}^\top},$
$\bm I_{mv}:={\rm E}\skakko{
(Z_t-{\bm\mu}_0^\top{\bm Y}_{t-1})^3
{\bm Y}_{t-1}{\bm Y}_{t-1}^\top}$, and\\ \noindent
$\bm I_v :={\rm E}\skakko{\skakko{(Z_t-{\bm\mu}_0^\top{\bm Y}_{t-1})^4-\skakko{\bm{\theta}_0^\top{\bm Y}_{t-1}}^2}{\bm Y}_{t-1}{\bm Y}_{t-1}^\top}$.

\end{lemma}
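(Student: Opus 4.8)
The plan is to work directly from the closed-form expressions for $\bm{\hat\mu}_n$ and $\bm{\hat\theta}_n$, exploiting the martingale-difference structure of the relevant residuals. Write $\bm Y_{t-1}=(Z_{t-1},\dots,Z_{t-p},1)^\top$, $e_t:=Z_t-\bm\mu_0^\top\bm Y_{t-1}$ and $u_t:=e_t^2-\bm\theta_0^\top\bm Y_{t-1}$. By \eqref{mvINAR}, $\mathrm E[e_t\mid\mathcal F_{t-1}]=m_t(\bm\mu_0)-\bm\mu_0^\top\bm Y_{t-1}=0$ and $\mathrm E[u_t\mid\mathcal F_{t-1}]=v_t(\bm\theta_0)-\bm\theta_0^\top\bm Y_{t-1}=0$, so $\{e_t\bm Y_{t-1}\}$, $\{u_t\bm Y_{t-1}\}$, and the stacked process $\bm\psi_t:=(e_t\bm Y_{t-1}^\top,\,u_t\bm Y_{t-1}^\top)^\top$ are martingale difference sequences with respect to $\{\mathcal F_t\}$; since $\{Z_t\}$ is strictly stationary and ergodic (\cite{latour97}), so are these sequences. (This is exactly the structure entering the general M-estimation framework of the Appendix, of which the conditional least squares estimator is a special case; what follows is the self-contained argument.) A preliminary step is to propagate moments through the thinning operators: conditionally on $\mathcal F_{t-1}$ the $k$-th moment of $\mu_i\circ Z_{t-i}=\sum_{j=1}^{Z_{t-i}}W_{i,t,j}$ is a polynomial of degree $k$ in $Z_{t-i}$ with leading coefficient $\mu_i^{k}$; combining this with the companion-form representation of the recursion \eqref{INAR}, whose spectral radius is less than one by the stated root condition, and inducting on $k\le 8$, one obtains $\mathrm E Z_t^{8}<\infty$ from $\mathrm E W_{i,t,j}^{8}<\infty$ and $\mathrm E\epsilon_t^{8}<\infty$ (cf.\ the moment computations in \cite{latour97}). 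In particular $\bm J_m=\bm J_v=\mathrm E[\bm Y_{t-1}\bm Y_{t-1}^\top]$ and the matrix $\bm I$ in the statement are finite, and $\bm J_m$ is nonsingular because $1,Z_{t-1},\dots,Z_{t-p}$ are not almost surely affinely dependent for a genuine GINAR($p$) process; hence $\bm J$ is invertible.

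Strong consistency then follows from the ergodic theorem applied to the explicit formulas. For $\bm{\hat\mu}_n$: $n^{-1}\sum_t\bm Y_{t-1}\bm Y_{t-1}^\top\to\bm J_m$ and $n^{-1}\sum_t Z_t\bm Y_{t-1}\to\mathrm E[m_t(\bm\mu_0)\bm Y_{t-1}]=\bm J_m\bm\mu_0$ almost surely, so $\bm{\hat\mu}_n\to\bm\mu_0$. Substituting this into the formula for $\bm{\hat\theta}_n$ and using
\[
(Z_t-\bm{\hat\mu}_n^\top\bm Y_{t-1})^2=e_t^2-2e_t\,((\bm{\hat\mu}_n-\bm\mu_0)^\top\bm Y_{t-1})+((\bm{\hat\mu}_n-\bm\mu_0)^\top\bm Y_{t-1})^2,
\]
the leading average $n^{-1}\sum_t e_t^2\bm Y_{t-1}$ tends to $\mathrm E[v_t(\bm\theta_0)\bm Y_{t-1}]=\bm J_v\bm\theta_0$; the cross term equals $(n^{-1}\sum_t e_t\bm Y_{t-1}\bm Y_{t-1}^\top)(\bm{\hat\mu}_n-\bm\mu_0)\to\bm 0$ because $\mathrm E[e_t\bm Y_{t-1}\bm Y_{t-1}^\top]=\bm 0$; and the last average is $O(\lVert\bm{\hat\mu}_n-\bm\mu_0\rVert^2)\,n^{-1}\sum_t\lVert\bm Y_{t-1}\rVert^3\to\bm 0$. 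Hence $\bm{\hat\theta}_n\to\bm\theta_0$ almost surely.

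For the asymptotic normality, the same expansion yields $\sqrt n(\bm{\hat\mu}_n-\bm\mu_0)=(n^{-1}\sum_t\bm Y_{t-1}\bm Y_{t-1}^\top)^{-1}n^{-1/2}\sum_t e_t\bm Y_{t-1}$ and $\sqrt n(\bm{\hat\theta}_n-\bm\theta_0)=(n^{-1}\sum_t\bm Y_{t-1}\bm Y_{t-1}^\top)^{-1}(n^{-1/2}\sum_t u_t\bm Y_{t-1})+\bm r_n$, where $\bm r_n=-2(n^{-1}\sum_t e_t\bm Y_{t-1}\bm Y_{t-1}^\top)\sqrt n(\bm{\hat\mu}_n-\bm\mu_0)+o_p(1)$; since $n^{-1}\sum_t e_t\bm Y_{t-1}\bm Y_{t-1}^\top\to\bm 0$ a.s.\ and $\sqrt n(\bm{\hat\mu}_n-\bm\mu_0)=O_p(1)$, we get $\bm r_n=o_p(1)$, so $\sqrt n((\bm{\hat\mu}_n-\bm\mu_0)^\top,(\bm{\hat\theta}_n-\bm\theta_0)^\top)^\top=\diag(\bm J_m^{-1},\bm J_v^{-1})\,n^{-1/2}\sum_t\bm\psi_t+o_p(1)$. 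Applying the martingale central limit theorem for stationary ergodic sequences to the square-integrable martingale differences $\{\bm\psi_t\}$ gives $n^{-1/2}\sum_t\bm\psi_t\Rightarrow N(\bm 0,\bm I)$ with $\bm I=\mathrm E[\bm\psi_t\bm\psi_t^\top]$, and a short conditional-moment calculation — using $\mathrm E[e_t^2\mid\mathcal F_{t-1}]=\bm\theta_0^\top\bm Y_{t-1}$ for the $(1,1)$ block, $\mathrm E[e_t\mid\mathcal F_{t-1}]=0$ to annihilate the $\bm\theta_0^\top\bm Y_{t-1}$-part of the off-diagonal block so that it reduces to $\mathrm E[e_t^3\bm Y_{t-1}\bm Y_{t-1}^\top]=\bm I_{mv}$, and again $\mathrm E[e_t^2\mid\mathcal F_{t-1}]=\bm\theta_0^\top\bm Y_{t-1}$ for the $(2,2)$ block — identifies the blocks of $\bm I$ with $\bm I_m$, $\bm I_{mv}$, $\bm I_v$, together with $\diag(\bm J_m,\bm J_v)=\bm J$. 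Slutsky's theorem then gives $\sqrt n((\bm{\hat\mu}_n-\bm\mu_0)^\top,(\bm{\hat\theta}_n-\bm\theta_0)^\top)^\top\Rightarrow N(\bm 0,\bm J^{-1}\bm I\bm J^{-1})$.

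I expect the moment-propagation step to be the main obstacle: bounding $\mathrm E Z_t^{8}$ (and the mixed moments $\mathrm E[e_t^4\lVert\bm Y_{t-1}\rVert^2]$ and the like) through the nested thinning operators of a GINAR($p$) process, and checking that the hypotheses of the ergodic theorem and of the martingale central limit theorem are in force, is the genuinely technical part; once those moment bounds are in hand, the expansion of the quadratic forms and the algebraic identification of $\bm J$ and $\bm I$ are routine.
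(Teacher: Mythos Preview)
Your argument is correct, but it follows a different route from the paper. The paper's proof of this lemma is a one-line appeal to the general M-estimation result in the Appendix (Theorem~\ref{thma1}): one checks that the conditional least squares loss functions $\ell_m(z,m)=-(z-m)^2$ and $\ell_v(z,m,v)=-((z-m)^2-v)^2$ satisfy the abstract conditions (B1)--(B12) and (C1)--(C13), and the conclusion drops out. In that framework the block-diagonality of $\bm J$ emerges because $\bm J_{vm}=\mathrm E[\partial^2\ell_v/\partial\bm\theta\partial\bm\eta^\top]$ vanishes for this particular $\ell_v$ (this is spelled out in the remark following the lemma).

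You instead exploit the closed-form normal equations and obtain the result directly: the ergodic theorem on the explicit averages gives consistency, the quadratic expansion of $(Z_t-\bm{\hat\mu}_n^\top\bm Y_{t-1})^2$ shows that the plug-in error in $\bm{\hat\theta}_n$ contributes only $o_p(1)$ (because $n^{-1}\sum_t e_t\bm Y_{t-1}\bm Y_{t-1}^\top\to\bm 0$), and the martingale CLT applied to the stacked score $\bm\psi_t$ finishes the job. The conditional-moment identification of the blocks $\bm I_m$, $\bm I_{mv}$, $\bm I_v$ is correct. What your approach buys is a shorter, self-contained argument tailored to the linear structure of the LSE, avoiding the verification of two dozen abstract regularity conditions; what the paper's approach buys is a general theorem that simultaneously covers other choices of $\ell_m$ and $\ell_v$ and nonlinear intensity/variance functions, with the lemma as an easy corollary. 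Your moment-propagation step ($\mathrm E Z_t^8<\infty$) is handled in the paper by citing Theorem~2.2 of \cite{dvrw08}, which you may prefer to invoke rather than redo.
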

\begin{proof}
It is easy to check the conditions (B1)--(B12) and (C1)--(C13) in Appendix. Thus, Theorem \ref{thma1} in Appendix yields the conclusion.
\end{proof}
\begin{remark}\label{rem_cons}{\rm 
The conditions ${\rm E}W_{i,t,j}^8<\infty$ and ${\rm E}\epsilon_{t}^8<\infty$ ensure that ${\rm E}Z_{t}^8<\infty$ (See Theorem 2.2 of \citealt{dvrw08}). The condition ${\rm E}Z_{t}^8<\infty$ is used when we show $\bm I_{v}$ is finite. To see this, it suffices to show that, for any $s,k\in\{1,\ldots,{p}\}$, 
$${\rm E}\skakko{\skakko{(Z_t-{\bm\mu}_0^\top{\bm Y}_{t-1})^4-\skakko{\bm{\theta}_0^\top{\bm Y}_{t-1}}^2}Z_{t-s}Z_{t-k}}<\infty$$
 since the proofs of 
$${\rm E}\skakko{\skakko{(Z_t-{\bm\mu}_0^\top{\bm Y}_{t-1})^4-\skakko{\bm{\theta}_0^\top{\bm Y}_{t-1}}^2}Z_{t-s}}<\infty$$ and $${\rm E}\skakko{\skakko{(Z_t-{\bm\mu}_0^\top{\bm Y}_{t-1})^4-\skakko{\bm{\theta}_0^\top{\bm Y}_{t-1}}^2}}<\infty$$ are similar. 
By the Cauchy--Schwarz inequality and the definition of the convex function, we obtain
\begin{align*}
&\mkakko{{\rm E}\skakko{\skakko{(Z_t-{\bm\mu}_0^\top{\bm Y}_{t-1})^4-\skakko{\bm{\theta}_0^\top{\bm Y}_{t-1}}^2}Z_{t-s}Z_{t-k}}}^{2}\\
\leq&
{\rm E}\skakko{(Z_t-{\bm\mu}_0^\top{\bm Y}_{t-1})^4-\skakko{\bm{\theta}_0^\top{\bm Y}_{t-1}}^2}^2
{\rm E}\skakko{Z_{t-s}^2Z_{t-k}^2}\\
\leq&
2{\rm E}\skakko{(Z_t-{\bm\mu}_0^\top{\bm Y}_{t-1})^8+\skakko{\bm{\theta}_0^\top{\bm Y}_{t-1}}^4}
\mkakko{{\rm E}\skakko{Z_{t-s}^4}}^{1/2}\mkakko{{\rm E}\skakko{Z_{t-k}^4}}^{1/2}\\
\leq&
{\rm E}\skakko{2^8Z_t^8+2^8({\bm\mu}_0^\top{\bm Y}_{t-1})^8+2\skakko{\bm{\theta}_0^\top{\bm Y}_{t-1}}^4}
\mkakko{{\rm E}\skakko{Z_{t-s}^4}}^{1/2}\mkakko{{\rm E}\skakko{Z_{t-k}^4}}^{1/2},
\end{align*}
which is finite since the repeated applications of the definition of the convex function yields that
${\rm E}\skakko{({\bm\mu}_0^\top{\bm Y}_{t-1})^8}$ is finite.

The estimator of $\bm{J^{-1}}\bm{I}\bm{J^{-1}}$ can be constructed by replacing ${\bm\mu}_0$ and ${\bm\theta}_0$ with $\hat{\bm\mu}_n$ and $\hat{\bm\theta}_n$, respectively, using empirical sums instead of expectations. Under the condition ${\rm E}Z_{t}^8<\infty$, the consistency of the estimator of $\bm{J^{-1}}\bm{I}\bm{J^{-1}}$can be shown by the uniform law of large numbers (see, e.g., \citealt[Theorem 7.3.1 (ii)]{nishiyama21}).}
\end{remark}

\begin{remark}
{\rm 
In Appendix, we derive the asymptotic normality of the M-estimator for the general model defined in \eqref{general_model}. Since $m_t(\bm\mu)$ and $v_t(\bm\mu,\bm\theta)$ include unobservable variables $Z_{0},Z_{-1},\ldots$, define computable quantities
$$\tilde m_t(\bm\mu):=m(Z_{t-1},Z_{t-2},\ldots,Z_1,\bm x_0;\bm\mu)$$ and 
$$\tilde v_t(\bm\theta):=v(Z_{t-1},Z_{t-2},\ldots,Z_1,\bm x_0;\bm\theta)$$ for some constant $\bm x_0$. 
The M-estimator is defined as
\begin{align*}
\bm{\check\mu}_n
:=\argmax_{\bm\mu\in \bm H}\frac{1}{n}\sum_{t=1}^n\ell_m(Z_t,\tilde m_t(\bm\mu))
\end{align*} and
\begin{align*}
\bm{\check\theta}_n
:=\argmax_{\bm\theta\in \bm \Theta}\frac{1}{n}\sum_{t=1}^n\ell_v(Z_t,\tilde m_t(\bm{\check\mu}_n),\tilde v_t(\bm\theta)),
\end{align*}
where $\ell_m$ and $\ell_v$ are some estimating functions.

 The asymptotic variance of  the M-estimator  is given by $\tilde{\bm V}:=(\tilde
{\bm v}_{ij})_{i,j=1,2}$, where
\begin{align*}
\tilde{\bm v}_{11}:=&{\bm {\tilde J}_m^{-1}}{\bm {\tilde I}_m}{\bm {\tilde J}_m^{-1}},\quad
\tilde{\bm v}_{12}:=
{\bm {\tilde J}_{m}^{-1}}({\bm {\tilde I}_{mv}}-{\bm {\tilde I}_m}{\bm {\tilde J}_m^{-1}}{\bm {\tilde J}_{vm}^\top}){\bm {\tilde J}_{v}^{-1}},\quad
\tilde{\bm v}_{21}:=\tilde{\bm v}_{12}^\top,\\
\tilde{\bm v}_{22}:=&{\bm {\tilde J}_{v}^{-1}}(
{\bm {\tilde I}_{v}}
+{\bm {\tilde J}_{vm}}{\bm {\tilde J}_{m}^{-1}}{\bm {\tilde I}_{m}}{\bm {\tilde J}_{m}^{-1}}{\bm {\tilde J}_{vm}^{\top}}
-{\bm {\tilde I}_{mv}^{\top}}{\bm {\tilde J}_{m}^{-1}}{\bm {\tilde J}_{vm}^{\top}}
-{\bm {\tilde J}_{vm}}{\bm {\tilde J}_{m}^{-1}}{\bm {\tilde I}_{mv}}
){\bm {\tilde J}_{v}^{-1}}
\end{align*}
with
\begin{align*}
{\bm {\tilde I}_m}:=&{\rm E}\skakko{
\frac{\partial}{\partial\bm\mu }\ell_m(Z_t, m_t(\bm\mu_0))
\frac{\partial}{\partial\bm\mu^\top}\ell_m(Z_t, m_t(\bm\mu_0))
},\\
{\bm {\tilde J}_m}:=&{\rm E}\skakko{\frac{\partial^2}{\partial\bm\mu \partial\bm\mu^\top}\ell_m(Z_t, m_t(\bm\mu_0))},\\
\bm {\tilde I}_v :=&{\rm E}\skakko{\frac{\partial}{\partial \bm \theta}\ell_v(Z_t, m_t(\bm{\mu}_0), v_t(\bm{\theta}_0))\frac{\partial}{\partial \bm \theta^\top}\ell_v(Z_t, m_t(\bm{\mu}_0), v_t(\bm{\theta}_0))},\\
\bm{{\tilde J}}_{v}:=&{\rm E}\skakko{\frac{\partial^2}{\partial \bm \theta \partial \bm \theta^\top}\ell_v(Z_t, m_t(\bm{\mu}_0), v_t(\bm{\theta}_0))},\\
\bm {\tilde I}_{mv}:=&{\rm E}\skakko{\frac{\partial}{\partial \bm \mu}\ell_m(Z_t, m_t(\bm{\mu}_0))\frac{\partial}{\partial \bm \theta^\top}\ell_v(Z_t, m_t(\bm{\mu}_0), v_t(\bm{\theta}_0))},\\
\text{and }\bm{{\tilde J}}_{vm}:=&{\rm E}\skakko{\frac{\partial^2}{\partial \bm \theta \partial \bm \mu^\top}\ell_v(Z_t, m_t(\bm{\mu}_0), v_t(\bm{\theta}_0))}.
\end{align*}
The asymptotic variance of $\bm{\check \theta}_n$ is complicated due to the fact that $\ell_v$ includes $\tilde m_t(\bm{\check\mu}_n)$.
In the proof, it can be shown that
\begin{align*}
&\begin{pmatrix}
\sqrt n(\bm{\check \mu}_n-\bm{ \mu}_0)\\
\sqrt n(\bm{\check\theta}_n-\bm{ \theta}_0)
\end{pmatrix}\\
=&
\begin{pmatrix}
\bm {\tilde J}_m^{-1}&\bm {\mathcal O}_{p+1}\\
-\bm{\tilde J}_{v}^{-1}\bm{\tilde J}_{vm}\bm {\tilde J}_m^{-1}&-\bm{\tilde  J}_{v}^{-1}
\end{pmatrix}
\begin{pmatrix}
\frac{1}{\sqrt n}\sum_{t=1}^n 
\frac{\partial}{\partial\bm\mu}\ell_m(Z_t,m_t(\bm\mu_0))
\\
\frac{1}{\sqrt n}\sum_{t=1}^n
\frac{\partial}{\partial \bm \theta}\ell_v(Z_t, m_t(\bm{\mu}_0), v_t(\bm\theta_0))
\end{pmatrix}+o_p(1),
\end{align*}
where $\bm {\mathcal O}_{p+1}$ is a $(p+1)\times(p+1)$ zero matrix, and the martingale central limit theorem yields the asymptotic normality.

On the other hand, the asymptotic variance of the least squares estimators is considerably simple due to the fact that $${\rm E}\skakko{\frac{\partial^2}{\partial \bm \theta \partial \bm \eta^\top}((Z_t- m_t(\bm{\mu}_0))^2-v_t(\bm{\theta}_0))^2}=\bm{\mathcal O}_{p+1},$$ 
which corresponds to $\bm{{\tilde J}}_{vm}:=\bm{\mathcal O}_{p+1}$. 

One may consider that the alternative definition of the least squares estimator as
\begin{align*}
\bm{\breve \theta}_n
:=\argmin_{\bm\theta\in \bm \Theta}\frac{1}{n}\sum_{t=1}^n(Z_t^2- m_t(\bm{\hat\mu}_n)^2-v_t(\bm{\theta}))^2.
\end{align*}
However in this case, we have
\begin{align*}
&{\rm E}\skakko{\frac{\partial^2}{\partial \bm \theta \partial \bm \eta^\top}(Z_t^2- m_t(\bm{\mu}_0)^2-v_t(\bm{\theta}_0))^2}\\
=&
{\rm E}\skakko{
4m_t(\bm{\mu}_0)
\frac{\partial}{\partial \bm \theta }v_t(\bm{\theta}_0)
\frac{\partial}{\partial \bm \mu^\top}  m_t(\bm{\mu}_0))
},
\end{align*}
which is not equal to a zero matrix. 
Therefore, our choise for $\ell_v$ is essential to obtain a simple form for the asymptotic variance of the estimator.}
\end{remark}

\begin{remark}{\rm 
 {It is worth to highlight that we established the asymptotic normality of the estimator for the variance of counting sequences. Therefore, it is reasonable to expect that the eighth order moments of the process to estimate the asymptotic variance of the variance for counting sequences.
In that case one still think that the assumption of eighth order moment is strong, we suggest to apply the weighted least square estimator (e.g. Section 3.5 of \cite{ac21} and Remark below Theorem 2 of \citealt{kt11}), which has $\sqrt n$-convergence rate, to relax the moment assumption.}

 {On the other hand,  the technique of \cite{c86} or \cite{ps94} might be applied to relax the moment assumption. Nonetheless, a fundamental trade-off exists in convergence rate. Specifically, their estimator converges at some rate slower than $\sqrt n$.}}
\end{remark}

\begin{remark}{\rm 
{Our approach can be applied to the integer-valued bilinear process defined as 
\begin{align*}%\label{INBL}
Z_t:=\sum_{i=1}^p \alpha_i \circ Z_{t-i} 
+\sum_{j=1}^q \beta_j \circ \epsilon_{t-i}
+\sum_{k=1}^m\sum_{\ell=1}^n \gamma_{k,\ell} \circ (X_{t-k}\epsilon_{t-\ell})
+ \epsilon_t,
\end{align*}
the  integer-valued ARCH process (see \citealt{dlo06}, \citealt{dvrw08}, \citealt{lt09}, \citealt{dfl12}), the signed INAR model  (see \citealt{kt11}, \citealt{lcz23}, and references therein). %The idea and mathematics are essentially the same.
}}
\end{remark}

\section{Numerical study}
In this section, we investigate the finite sample performance of our test.
We consider INAR(1) model whose counting sequence and disturbance process follow BerG$(\pi_{\rm BW},\xi_{\rm BW})$ and Poisson distributions with parameter $\lambda=1$, respectively. See Example \ref{ex}.

We set the null hypothesis as the counting sequence following the Bernoulli distribution, which corresponds to the case $\xi_{\rm BW}=0$, and the disturbance process following the Poisson distribution. 
For this null hypothesis, the functions $\kappa_1(\mu_1)$ and $\kappa_\epsilon(\mu_\epsilon)$ are given by
$\kappa_1(\mu_1):=\mu_1(1-\mu_1)$ and $\kappa_\epsilon(\mu_\epsilon):=\mu_\epsilon$, respectively, where
$\mu_1:=\pi_{\rm BW}$ and $\lambda:=\mu_\epsilon$.
Then, the matrix $\bm K $ in $\bm W$ is given by
$\bm K:=\begin{pmatrix}
1-2\mu_1&0\\
0&1
\end{pmatrix}$.

Let the significance level of the test be $\phi:=0.05$, the number of iterations $R:=1000$, the time series length $n\in\{500,1000, 2000\}$, burn-in period 1000.
We use the parameters $(\pi_{\rm BW},\xi_{\rm BW})$ on the sets $\{0.2,0.3,\ldots,0.8\}\times\{0\}$ for the null hypothesis and
$\{0.2,0.3,\ldots,0.6\}\times\{0.05, 0.1, \ldots, 0.3\}$ for the alternative hypothesis. Note that the parameters $\pi_{\rm BW}$ and $\xi_{\rm BW}$ need to satisfy the condition $\pi_{\rm BW}+\xi_{\rm BW}<1$ to ensure stationarity of the process. 
The least squares estimators are used to estimate unknown parameters. 
%The weighted least squares estimators  with $\omega\equiv1$ are used to estimate unknown parameters (See Remark \ref{wls}). 

We generate INAR(1) process with length $n$ and apply our test. Then, we iterate 1000 times to calculate empirical size or power. 
Table \ref{tb1} shows  the empirical size of the test. 
We observe that our test has reasonable size overall although our test has slightly over-rejection for the large values of $\pi_{\rm BW}$. 
This corresponds to the fact that when $\pi_{\rm BW}$ takes large values, $\pi_{\rm BW}+\xi_{\rm BW}$ is close to the boundary of the stationary region of INAR(1).

Table \ref{tb2} presents the empirical power of the test. As $n$ or $\xi_{\rm BW}$ gets larger, as the power of the test increases. As expected, it is more difficult to reject the null hypothesis when $\xi_{\rm BW}$ is small, which is the case that the null and alternative hypotheses are close.

\begin{table}
  \begin{center}
    \caption{The empirical size at the nominal size $0.05$}
\label{tb1}
    \begin{tabular}{cccc}
$\pi_{\rm BW}$&\multicolumn{3}{c}{$n$}\\\cline{2-4}
 &500&1000&2000\\\hline
0.2&0.066 &0.070 &0.059\\
0.3&0.070 &0.062 &0.050\\
0.4&0.093 &0.063 &0.057\\
0.5&0.087 &0.065 &0.056\\
0.6&0.086 &0.054 &0.059\\
0.7&0.076 &0.070 &0.070\\
0.8&0.090 &0.075 &0.075\\
    \end{tabular}
 
  \end{center}
\end{table}

\begin{table}
\centering
\caption{The empirical power at the nominal power $0.05$}
\label{tb2}
\begin{tabular}{cccccccc}
\multicolumn{1}{c}{$\pi_{\rm BW}$} & \multicolumn{1}{c}{$n$} &\multicolumn{6}{c}{$\xi_{\rm BW}$} \\ \cline{3-8}
& & \multicolumn{1}{c}{0.05} & \multicolumn{1}{c}{0.1} & \multicolumn{1}{c}{0.15} & \multicolumn{1}{c}{0.2} & \multicolumn{1}{c}{0.25}& \multicolumn{1}{c}{0.3}\\ \cline{1-8}
\multirow{3}{*}{0.2} 
& 500 &0.061 & 0.084 & 0.180 & 0.389 & 0.643 & 0.903 \\ 
& 1000 &0.049 & 0.128 & 0.411 & 0.758 & 0.968 & 0.999 \\ 
& 2000 &0.088 & 0.327 & 0.775 & 0.978 & 1.000 & 1.000 \\ \cline{1-8}
\multirow{3}{*}{0.3} 
& 500 &0.047 & 0.097 & 0.302 & 0.603 & 0.865 & 0.983 \\ 
& 1000 &0.073 & 0.215 & 0.640 & 0.933 & 0.998 & 1.000 \\ 
& 2000 &0.111 & 0.510 & 0.948 & 1.000 & 1.000 & 1.000 \\ \cline{1-8}
\multirow{3}{*}{0.4} 
& 500 &0.064 & 0.165 & 0.486 & 0.806 & 0.971 & 1.000 \\ 
& 1000 &0.078 & 0.401 & 0.840 & 0.994 & 1.000 & 1.000 \\
& 2000 &0.178 & 0.748 & 0.990 & 1.000 & 1.000 & 1.000 \\ \cline{1-8}
\multirow{3}{*}{0.5} 
& 500 &0.072 & 0.303 & 0.694 & 0.949 & 0.999 & 0.993 \\ 
& 1000 & 0.129 & 0.598 & 0.970 & 1.000 & 1.000 & 0.998 \\ 
& 2000 &0.289 & 0.912 & 1.000 & 1.000 & 1.000 & 1.000 \\ \cline{1-8}
\multirow{3}{*}{0.6} 
& 500 &0.086 & 0.418 & 0.870 & 0.981 & 0.974 & 0.951 \\ 
& 1000 &0.190 & 0.785 & 0.993 & 0.999 & 0.983 & 0.957 \\ 
& 2000 &0.445 & 0.987 & 1.000 & 1.000 & 0.998 & 0.973 \\ 
\end{tabular}
\end{table}

\section{Empirical application}
In this section, we apply our method to the monthly number of skin lesions and anorexia cases in animals submitted to animal health laboratories in New Zealand (84 observations were collected from January 2003 to December 2009). These data can be found in Tables 3 and 4 of \cite{mbs18}.  \cite{jjl12} and \cite{mbs18} fitted these datasets to binomial thinning base INAR(1) model with zero inflated Poisson innovations and innovations induced by the Poisson--Lindley distribution, respectively.
Our objective of this section is to confirm if the binomial thinning operator is appropriate for the datasets.

%The weighted least squares estimators 
The estimated values of the least squares estimators $\bm{{\hat \mu}}_{n}=({\hat{ \mu}}_{1},{{\hat \mu}}_{\epsilon})$ and $\bm{{\hat \theta}}_{n}=({\hat{ \sigma}}^2_{1},{\hat{ \sigma}}^2_{\epsilon})$ are $(0.325,0.964)$ and $(0.841,1.97)$ for skin-lesions and $(0.680,0.263)$ and $(2.40,0.102)$ for anorexia,  respectively.% (See Remark \ref{wls}). 
The estimation shows that the counting series and disturbance process have overdispersion property for skin-lesions and overdispersion and underdispersion properties for anorexia, respectively.

We consider the null hypothesis that the datasets follow INAR(1) model with the binomial thinning operator. Our test only for the thinning operator (not for both thinning operator and innovation) gives p-values 0.182 for skin-lesions and 0.00277 for anorexia. We conclude there is no evidence to reject the null hypothesis for skin-lesions but that binomial thinning operator is not appropriate for anorexia. Therefore, we suggest that the counting series should be chosen carefully, rather than using the binomial thinning operator that has been used as customary.

\appendix

%The supplementary material provides the strong consistency and the asymptotic normality of the M-estimator for the general model including INAR($p$) model as a special case in Section \ref{suppl1}.
%Those proof are shown in Section \ref{suppl2}.

\section{General model and estimation}\label{suppl1}
In this section, we introduce the general model including the INAR($p$) model and show the strong consistency and the asymptotic normality of the M-estimator.

Let $\{Z_t\}$ be integer-valued time series satisfying
\begin{align*}
{\rm E}\skakko{Z_t\mid \mathcal F_{t-1}} = m_t(\bm\eta_0)\text{ and }
{\rm Var}\skakko{Z_t\mid \mathcal F_{t-1}} = v_t(\bm\theta_0),
\end{align*}
where 
$\mathcal F_{t-1}$ is the $\sigma$-field generated by $\{Z_{s},s\leq t-1\}$, for a known function $m$ on $[0,\infty)^\infty\times\mathbb R^d$ to $(\delta,+\infty)$ for some $\delta>0$, 
$$m_t(\bm\eta_0):=m(Z_{t-1},Z_{t-2},\ldots;\bm\eta_0),$$ for a known function $v$ on $[0,\infty)^\infty\times\mathbb R^{d^\prime}$ to $(\delta,+\infty)$,  $$v_t(\bm\theta_0):=v(Z_{t-1},Z_{t-2},\ldots;\bm\theta_0),$$  and $(\bm\eta_0^\top,\bm\theta_0^\top)^\top\in\mathbb R^{d+d^\prime}$ is unknown parameter. 

Since $m_t(\bm\eta)$ and $v_t(\bm\theta)$ include unobservable variables $Z_{0},Z_{-1},\ldots$, define 
$$\tilde m_t(\bm\eta):=m(Z_{t-1},Z_{t-2},\ldots,Z_1,\bm x_0;\bm\eta)$$ and 
$$\tilde v_t(\bm\theta):=v(Z_{t-1},Z_{t-2},\ldots,Z_1,\bm x_0;\bm\theta)$$ for some constant $\bm x_0$. We assume the following condition throughout this Appendix.
\begin{ass}{\rm 
(A) $\{Z_t\}$ is strictly stationary and ergodic.}
\end{ass}

The M-estimator of $\bm{\eta}_0$ is defined as 
\begin{align*}
\bm{\hat \eta}_n
:=\argmax_{\bm\eta\in \bm H}\tilde L_{n,m}(\bm\eta),
\end{align*}
where 
$\tilde L_{n,m}(\eta):= \frac{1}{n}\sum_{t=1}^n \ell_m(Z_t,\tilde m_t(\bm\eta))$ and 
$\ell_m$ is a measurable function.

We assume the next condition to prove the strong consistency and the asymptotic normality of $\bm{\hat \eta}_n$.
\begin{ass}{\rm 
(B1) It holds that
$$\sup_{\bm \eta\in \bm H}
\left|\ell_m(Z_t,\tilde m_t(\bm\eta))-\ell_m(Z_t, m_t(\bm\eta))\right|\to0 \quad \text{a.s. as $t\to\infty$.}$$

\noindent
(B2) The function $m$ is almost surely continuous with respect to $\bm \eta$ and 
$\ell_m(\cdot,\cdot)$ is almost surely continuous  with respect to its second component.\\

\noindent
(B3) The expectation of $\ell_m(Z_t, m_t(\bm\eta_0))$ is finite.\\

\noindent
(B4) The expectation of $\ell_m(Z_t, m_t(\bm\eta_0))$ has a unique maximum at $\bm\eta_0$.\\

\noindent
(B5) The parameter space $\bm H$ is compact.\\

\noindent
(B6) The function $m$ is twice continuously differentiable with respect to $\bm \eta$ and 
$\ell_m(\cdot,\cdot)$ is twice continuously differentiable  with respect to its second component.\\

\noindent
(B7) It holds that
$$\sup_{\bm \eta\in \bm H}
\left\|\frac{\partial}{\partial \bm \eta}\ell_m(Z_t,\tilde m_t(\bm\eta))-\frac{\partial}{\partial \bm \eta}\ell_m(Z_t, m_t(\bm\eta))\right\|=O(t^{-\frac{1}{2}-\delta}) \quad \text{a.s. as $t\to\infty$.}$$\\

\noindent
(B8) The true parameter $\bm \eta_0$ belongs to the interior of $\bm H$.\\

\noindent
(B9) There exists a neighborhood $B(\bm \eta_0)$ of $\bm \eta_0$ such that
\begin{align*}
{\rm E}\skakko{
\sup_{\bm \eta \in B(\bm \eta_0)}\left\|
\frac{\partial^2}{\partial\bm\eta\partial\bm\eta^\top}
\ell_m(Z_t, m_t(\bm\eta))
\right\|}<\infty.
\end{align*}

\noindent
(B10) It holds that ${\rm E}\skakko{\ell_m^\prime(Z_t,m_t(\bm\eta_0))\mid \mathcal F_{t-1}}=0$ a.s., where $\ell_m^\prime(\cdot,\cdot)$ is the first derivative of $\ell_m(\cdot,\cdot)$ with respect to its second component.\\

\noindent
(B11) The following holds true: ${\rm E}\skakko{\ell_m^{\prime\prime}(Z_t,m_t(\bm\eta_0))\mid \mathcal F_{t-1}}$ is almost surely negative, where $\ell_m^{\prime\prime}(\cdot,\cdot)$ is the second derivative of $\ell_m(\cdot,\cdot)$ with respect to its second component, and if 
${\bm s}^\top\frac{\partial}{\partial\bm\eta}m_t(\bm\eta_0)=0$, then 
$\bm s=\bm0$.\\

\noindent
(B12) 
It holds that
$${\rm E}\left\|
\frac{\partial}{\partial \bm \eta}\ell_m(Z_t, m_t(\bm\eta_0))
\right\|^{2+\delta}<\infty.$$}
\end{ass}

The following lemma is due to Theorems 1 and 2 of \cite{gf23}.
\begin{lemma}
Under Assumptions (A) and (B1)--(B5),  $\bm{\hat \eta}_n$ tends almost surely to $\bm\eta_0$ as $n\to\infty$. Moreover, under Assumptions (A) and (B1)--(B12), 
\begin{align*}
\sqrt n (\bm{\hat \eta}_n - \bm\eta_0)\Rightarrow 
N(0, {\bm J_m}^{-1}{\bm I_m}{\bm J_m}^{-1}) \quad\text{as $n\to\infty$,}
\end{align*}
where
\begin{align*}
{\bm J_m}:=&{\rm E}\skakko{\frac{\partial^2}{\partial\bm\eta \partial\bm\eta^\top}\ell_m(Z_t, m_t(\bm\eta_0))}\\
=&
{\rm E}\skakko{
\ell_m^{\prime\prime}(Z_t, m_t(\bm\eta_0))
\frac{\partial}{\partial\bm\eta}m_t(\bm\eta_0)
\frac{\partial}{\partial\bm\eta^\top}m_t(\bm\eta_0)}\\
\text{and }
{\bm I_m}:=&{\rm E}\skakko{
\frac{\partial}{\partial\bm\eta }\ell_m(Z_t, m_t(\bm\eta_0))
\frac{\partial}{\partial\bm\eta^\top}\ell_m(Z_t, m_t(\bm\eta_0))
}\\
=&
{\rm E}\skakko{
\skakko{{\ell_m^{\prime}}(Z_t, m_t(\bm\eta_0))}^2
\frac{\partial}{\partial\bm\eta}m_t(\bm\eta_0)
\frac{\partial}{\partial\bm\eta^\top}m_t(\bm\eta_0)}.
\end{align*}
\end{lemma}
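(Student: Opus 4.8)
The plan is to obtain both assertions from the general M-estimation theory underlying Theorems 1 and 2 of \cite{gf23}; I describe the extremum-estimator argument it rests on so that the role of each of Assumptions (A) and (B1)--(B12) is explicit. For the strong consistency I would run the usual argument for the maximizer of a stochastic criterion. First, using (B1) and a standard averaging argument --- the approximation errors tend to zero a.s., hence so does their average --- I replace the feasible criterion $\tilde L_{n,m}(\bm\eta)=n^{-1}\sum_{t=1}^n\ell_m(Z_t,\tilde m_t(\bm\eta))$ by its infeasible counterpart $L_{n,m}(\bm\eta):=n^{-1}\sum_{t=1}^n\ell_m(Z_t,m_t(\bm\eta))$, uniformly in $\bm\eta\in\bm H$, up to an a.s.\ negligible term. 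Then strict stationarity and ergodicity (Assumption (A)), the almost-sure continuity in (B2), the compactness in (B5), and the integrability in (B3) yield a uniform law of large numbers $\sup_{\bm\eta\in\bm H}\left|L_{n,m}(\bm\eta)-{\rm E}\,\ell_m(Z_t,m_t(\bm\eta))\right|\to0$ a.s. Finally, the unique-maximum condition (B4) turns uniform convergence of the objective into convergence of the argmax, so $\bm{\hat\eta}_n\to\bm\eta_0$ a.s.

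For the asymptotic normality I would expand the score around $\bm\eta_0$. Since $\bm\eta_0$ is interior by (B8) and $\bm{\hat\eta}_n$ is consistent, eventually $\frac{\partial}{\partial\bm\eta}\tilde L_{n,m}(\bm{\hat\eta}_n)=\bm0$, and a mean-value expansion (licensed by (B6)) gives $\sqrt n(\bm{\hat\eta}_n-\bm\eta_0)=-\bar{\bm H}_n^{-1}\,n^{-1/2}\sum_{t=1}^n\frac{\partial}{\partial\bm\eta}\ell_m(Z_t,\tilde m_t(\bm\eta_0))$, where $\bar{\bm H}_n$ is the Hessian of $\tilde L_{n,m}$ at an intermediate point. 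The rate bound (B7) shows that $n^{-1/2}\sum_{t=1}^n\bigl(\frac{\partial}{\partial\bm\eta}\ell_m(Z_t,\tilde m_t(\bm\eta_0))-\frac{\partial}{\partial\bm\eta}\ell_m(Z_t,m_t(\bm\eta_0))\bigr)$ is of order $n^{-1/2}\sum_{t=1}^n t^{-1/2-\delta}=O(n^{-\delta})=o_p(1)$, so $\tilde m_t$ may be replaced by $m_t$ in the score. By (B10) the summands $\frac{\partial}{\partial\bm\eta}\ell_m(Z_t,m_t(\bm\eta_0))=\ell_m^{\prime}(Z_t,m_t(\bm\eta_0))\,\frac{\partial}{\partial\bm\eta}m_t(\bm\eta_0)$ form a stationary, ergodic martingale difference sequence for $\{\mathcal F_t\}$ (since $m_t(\bm\eta_0)$ and $\frac{\partial}{\partial\bm\eta}m_t(\bm\eta_0)$ are $\mathcal F_{t-1}$-measurable), and the $2+\delta$ moment bound (B12) then yields, through the martingale central limit theorem, $n^{-1/2}\sum_{t=1}^n\frac{\partial}{\partial\bm\eta}\ell_m(Z_t,m_t(\bm\eta_0))\Rightarrow N(\bm0,{\bm I_m})$. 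For the Hessian, the integrable envelope in (B9), ergodicity, and consistency of $\bm{\hat\eta}_n$ (hence of the intermediate point) give $\bar{\bm H}_n\to{\bm J_m}$ a.s., while (B11) --- negativity of ${\rm E}[\ell_m^{\prime\prime}(Z_t,m_t(\bm\eta_0))\mid\mathcal F_{t-1}]$ together with the full-rank condition on $\frac{\partial}{\partial\bm\eta}m_t(\bm\eta_0)$ --- makes ${\bm J_m}$ non-singular. Slutsky's theorem then assembles these three ingredients into $\sqrt n(\bm{\hat\eta}_n-\bm\eta_0)\Rightarrow N(\bm0,{\bm J_m}^{-1}{\bm I_m}{\bm J_m}^{-1})$.

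The step I expect to be the main obstacle, and the reason the assumption list is this long, is controlling the gap between the computable $\tilde m_t(\bm\eta)$ --- built from the finitely many observations and an arbitrary initialization $\bm x_0$ --- and the theoretical $m_t(\bm\eta)$, which depends on the infinite past: one must ensure that these initialization effects vanish at rate $o(1)$ for the objective and its Hessian and at a rate that stays negligible after dividing by $\sqrt n$ for the score, which is exactly what (B1) and (B7) are designed to supply. The remaining technical point is securing the uniform-in-$\bm\eta$ control needed both for the uniform law of large numbers and for passing from $\bar{\bm H}_n$ at the intermediate point to ${\bm J_m}$; here (B9) provides the dominating function and the ergodic theorem does the rest.
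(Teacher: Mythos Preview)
Your proposal is correct and matches the paper's approach: the paper simply attributes this lemma to Theorems~1 and~2 of \cite{gf23} without further argument, and you have accurately identified this source and spelled out the standard extremum-estimator machinery those theorems rest on, with each assumption (A) and (B1)--(B12) playing exactly the role you describe. The only minor stylistic difference is that the companion proof of Theorem~\ref{thma1} in the Appendix obtains the uniform control not via a direct uniform LLN but through a covering/Beppo--Levi argument on shrinking balls, though this is an equivalent route to the same conclusion.
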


Next, we consider the estimation of $\bm \theta_0$. The M-estimator of $\bm \theta_0$ is defined by
\begin{align*}
\bm{\hat \theta}_n
:=\argmax_{\bm\theta\in \bm \Theta}\tilde L_{n,v}(\bm{\hat \eta}_n,\bm\theta),
\end{align*}
where 
$\tilde L_{n,v}(\bm{\hat \eta}_n,\bm\theta):= \frac{1}{n}\sum_{t=1}^n \ell_v(Z_t,\tilde m_t(\bm{\hat\eta}_n),\tilde v_t(\bm\theta))$ and 
$\ell_v$ is a measurable function.

We suppose the following conditions.
\begin{ass}{\rm 
(C1) It holds that
$$\sup_{(\bm \eta,\bm\theta)\in \bm H\times \bm\Theta}
\left|\ell_v(Z_t,\tilde m_t(\bm{\eta}),\tilde v_t(\bm\theta))-\ell_v(Z_t,m_t(\bm{\eta}), v_t(\bm\theta))\right|\to0 \quad \text{a.s. as $t\to\infty$.}$$

\noindent
(C2) The function $\ell_v(\cdot,\cdot,\cdot)$ is differentiable with respect to its second and third components and, for some neighborhood $B(\bm \eta_0)$ of $\bm \eta_0$, 
\begin{align*}
{\rm E}
\skakko{
\sup_{\bm \eta \in B(\bm \eta_0)}
\sup_{\bm \theta \in \Theta}
\left\|
\frac{\partial}{\partial\bm\eta}
\ell_v(Z_t, m_t(\bm\eta),v_t(\bm\theta))
\right\|}<\infty.
\end{align*}

\noindent
(C3) The function $v$ is almost surely continuous with respect to $\bm \theta$ and $\ell_v(\cdot,\cdot,\cdot)$ is almost surely continuous  with respect to its third component.\\

\noindent
(C4) The expectation of $\ell_v(Z_t, m_t(\bm\eta_0),v_t(\bm\theta_0))$ is finite.\\

\noindent
(C5) The expectation of $\ell_v(Z_t, m_t(\bm\eta_0),v_t(\bm\theta))$ has a unique maximum at $\bm\theta_0$.\\

\noindent
(C6) The parameter space $\bm \Theta$ is compact.\\

\noindent
(C7) The function $v$ is twice continuously differentiable with respect to $\bm \theta$, and
$$\frac{\partial^2}{\partial z\partial y}\ell_v(x,y,z)\text{ and } 
\frac{\partial^2}{\partial z\partial z}\ell_v(x,y,z)$$ 
exist and are continuous with respect to their second and third components.\\

\noindent
(C8) It holds that
\begin{align*}
&\sup_{(\bm \eta,\bm \theta)\in \bm H\times \bm \Theta}
\left\|\frac{\partial}{\partial \bm \theta}\ell_v(Z_t,\tilde m_t(\bm{\eta}),\tilde v_t(\bm\theta))-\frac{\partial}{\partial \bm \theta}\ell_v(Z_t, m_t(\bm{\eta}),v_t(\bm\theta))\right\|\\
&=O(t^{-\frac{1}{2}-\delta})\quad\text{a.s. as $t\to\infty$.}
\end{align*}

\noindent
(C9) The true parameter $\bm \theta_0$ belongs to the interior of $\bm \Theta$.\\

\noindent
(C10) There exists neighborhoods $B_1(\bm \eta_0)$ of $\bm \eta_0$ and $B_2(\bm \theta_0)$ of $\bm \theta_0$ such that
\begin{align*}
{\rm E}\skakko{
\sup_{(\bm \eta,\bm \theta) \in B_1(\bm \eta_0)\times B_2(\bm \theta_0)}\left\|
\frac{\partial^2}{\partial\bm\theta\partial\bm\eta^\top}
\ell_v(Z_t, m_t(\bm{\eta}),v_t(\bm\theta))
\right\|}<\infty 
\end{align*}
and
\begin{align*}
{\rm E}\skakko{
\sup_{(\bm \eta,\bm \theta) \in B_1(\bm \eta_0)\times B_2(\bm \theta_0)}\left\|
\frac{\partial^2}{\partial\bm\theta\partial\bm\theta^\top}
\ell_v(Z_t, m_t(\bm{\eta}),v_t(\bm\theta))
\right\|}<\infty.
\end{align*}

\noindent
(C11) It holds that ${\rm E}\skakko{\ell_v^\prime(Z_t, m_t(\bm{\eta}_0),v_t(\bm\theta_0))\mid \mathcal F_{t-1}}=0$  a.s., where $\ell_v^\prime(\cdot,\cdot,\cdot)$ is the first derivative of $\ell_v(\cdot,\cdot,\cdot)$ with respect to its third component.\\

\noindent
(C12) The following holds true: ${\rm E}\skakko{\ell_v^{\prime\prime}(Z_t, m_t(\bm{\eta}_0),v_t(\bm\theta_0))\mid \mathcal F_{t-1}}$ is {almost surely negative}, where $\ell_v^{\prime\prime}(\cdot,\cdot,\cdot)$ is the second derivative of $\ell_v(\cdot,\cdot,\cdot)$ with respect to its third component, and if 
${\bm s}^\top\frac{\partial}{\partial\bm\theta}v_t(\bm\theta_0)=0$, then 
$\bm s=\bm0$.\\

\noindent
(C13) 
It holds that
$${\rm E}\left\|\frac{\partial}{\partial \bm \theta}\ell_v(Z_t, m_t(\bm{\eta}_0), v_t(\bm{\theta}_0))\right\|^{2+\delta}<\infty.$$}
\end{ass}

The following theorem shows that the strong consistency of $\bm{\hat \theta}_n$ and  asymptotic joint normality of $\bm{\hat \eta}_n$ and $\bm{\hat \theta}_n$.
\begin{theorem}\label{thma1}
Under (A), (B1)--(B12), and (C1)--(C6), 
$\bm{\hat \theta}_n$ tends almost surely to $\bm\theta_0$ as $n\to\infty$. Moreover, under (A), (B1)--(B12), and (C1)--(C14), it holds that 
\begin{align*}
\sqrt n
\begin{pmatrix}
\bm{\hat \eta}_n - \bm\eta_0\\
\bm{\hat \theta}_n - \bm\theta_0\\
\end{pmatrix}\Rightarrow
N\skakko{\bm0, \bm V}
 \quad\text{as $n\to\infty$,}
\end{align*}
where $\bm V:=(\bm v_{ij})_{i,j=1,2}$ and
\begin{align*}
\bm v_{11}:=&{\bm J_m^{-1}}{\bm I_m}{\bm J_m^{-1}},\\
\bm v_{12}:=&-{\bm J_m^{-1}}{\bm I_m}{\bm J_m^{-1}}{\bm J_{vm}^\top}{\bm J_{v}^{-1}}
+{\bm J_{m}^{-1}}{\bm I_{mv}}{\bm J_{v}^{-1}}\\
=&{\bm J_{m}^{-1}}({\bm I_{mv}}-{\bm I_m}{\bm J_m^{-1}}{\bm J_{vm}^\top}){\bm J_{v}^{-1}},\\
\bm v_{21}:=&-{\bm J_{v}^{-1}}{\bm J_{vm}}{\bm J_{m}^{-1}}{\bm I_{m}}{\bm J_{m}^{-1}}+{\bm J_{v}^{-1}}{\bm I_{mv}^{\top}}{\bm J_{m}^{-1}}\\
=&\bm v_{12}^\top,\\
\bm v_{22}:=&{\bm J_{v}^{-1}}{\bm J_{vm}}{\bm J_{m}^{-1}}{\bm I_{m}}{\bm J_{m}^{-1}}{\bm J_{vm}^{\top}}{\bm J_{v}^{-1}}
-{\bm J_{v}^{-1}}{\bm I_{mv}^{\top}}{\bm J_{m}^{-1}}{\bm J_{vm}^{\top}}{\bm J_{v}^{-1}}
\\
&-{\bm J_{v}^{-1}}{\bm J_{vm}}{\bm J_{m}^{-1}}{\bm I_{mv}}{\bm J_{v}^{-1}}+{\bm J_{v}^{-1}}{\bm I_{v}}{\bm J_{v}^{-1}}\\
=&
{\bm J_{v}^{-1}}(
{\bm I_{v}}
+{\bm J_{vm}}{\bm J_{m}^{-1}}{\bm I_{m}}{\bm J_{m}^{-1}}{\bm J_{vm}^{\top}}
-{\bm I_{mv}^{\top}}{\bm J_{m}^{-1}}{\bm J_{vm}^{\top}}
-{\bm J_{vm}}{\bm J_{m}^{-1}}{\bm I_{mv}}
){\bm J_{v}^{-1}}
\end{align*}
with
\begin{align*}
\bm{J}_{vm}:=&{\rm E}\skakko{\frac{\partial^2}{\partial \bm \theta \partial \bm \eta^\top}\ell_v(Z_t, m_t(\bm{\eta}_0), v_t(\bm{\theta}_0))},\\
\bm{J}_{v}:=&{\rm E}\skakko{\frac{\partial^2}{\partial \bm \theta \partial \bm \theta^\top}\ell_v(Z_t, m_t(\bm{\eta}_0), v_t(\bm{\theta}_0))},\\
\bm I_{mv}:=&{\rm E}\skakko{\frac{\partial}{\partial \bm \eta}\ell_m(Z_t, m_t(\bm{\eta}_0))\frac{\partial}{\partial \bm \theta^\top}\ell_v(Z_t, m_t(\bm{\eta}_0), v_t(\bm{\theta}_0))},\\
\text{and }\bm I_v :=&{\rm E}\skakko{\frac{\partial}{\partial \bm \theta}\ell_v(Z_t, m_t(\bm{\eta}_0), v_t(\bm{\theta}_0))\frac{\partial}{\partial \bm \theta^\top}\ell_v(Z_t, m_t(\bm{\eta}_0), v_t(\bm{\theta}_0))}.
\end{align*}

%\red{Note: 
%$\frac{\partial}{\partial t}f(x(t),y(t))=\frac{\partial}{\partial x}f(x,y)\frac{\partial x(t)}{\partial t}+\frac{\partial}{\partial y}f(x,y)\frac{\partial y(t)}{\partial t}$}
\end{theorem}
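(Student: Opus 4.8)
The plan is to handle the two estimation steps in sequence, taking the first-step conclusions for $\bm{\hat\eta}_n$ from the preceding lemma as given, namely strong consistency under (A), (B1)--(B5) and the linear representation $\sqrt n(\bm{\hat\eta}_n-\bm\eta_0)=-\bm J_m^{-1}\frac{1}{\sqrt n}\sum_{t=1}^n\partial_{\bm\eta}\ell_m(Z_t,m_t(\bm\eta_0))+o_p(1)$ under (A), (B1)--(B12). It then remains to treat $\bm{\hat\theta}_n$ as an M-estimator with the estimated nuisance parameter $\bm{\hat\eta}_n$ plugged in, which I would do via a uniform law of large numbers for the profiled criterion, a joint mean value expansion of the estimating equation, and a martingale central limit theorem for the stacked scores.

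\emph{Strong consistency of $\bm{\hat\theta}_n$.} I would first establish $\sup_{\bm\theta\in\bm\Theta}\left|\tilde L_{n,v}(\bm{\hat\eta}_n,\bm\theta)-\E\,\ell_v(Z_t,m_t(\bm\eta_0),v_t(\bm\theta))\right|\to0$ a.s.\ by splitting the left-hand quantity into three pieces: (i) the initial-value error, i.e.\ $\tilde L_{n,v}(\bm{\hat\eta}_n,\bm\theta)$ against $\frac{1}{n}\sum_{t=1}^n\ell_v(Z_t,m_t(\bm{\hat\eta}_n),v_t(\bm\theta))$, which vanishes uniformly by (C1) and a Ces\`aro argument; (ii) the plug-in error against $\frac{1}{n}\sum_{t=1}^n\ell_v(Z_t,m_t(\bm\eta_0),v_t(\bm\theta))$, which a first-order Taylor bound dominates by $\|\bm{\hat\eta}_n-\bm\eta_0\|\cdot\frac{1}{n}\sum_{t=1}^n\sup_{B(\bm\eta_0)\times\bm\Theta}\|\partial_{\bm\eta}\ell_v\|$ and which is $o(1)$ a.s.\ by (C2), the ergodic theorem, and first-step consistency; and (iii) $\frac{1}{n}\sum_{t=1}^n\ell_v(Z_t,m_t(\bm\eta_0),v_t(\bm\theta))$, which converges uniformly to its mean by the uniform ergodic theorem under (C3), (C4), (C6). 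Combined with the unique-maximum condition (C5) and compactness (C6), the standard argmax argument gives $\bm{\hat\theta}_n\to\bm\theta_0$ a.s.

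\emph{Asymptotic normality.} I would expand the estimating equation $0=\partial_{\bm\theta}\tilde L_{n,v}(\bm{\hat\eta}_n,\bm{\hat\theta}_n)$ by the mean value theorem, first in $\bm\theta$ about $\bm\theta_0$ and then in $\bm\eta$ about $\bm\eta_0$, obtaining $0=\partial_{\bm\theta}\tilde L_{n,v}(\bm\eta_0,\bm\theta_0)+\widehat{\bm J}_{vm}(\bm{\hat\eta}_n-\bm\eta_0)+\widehat{\bm J}_{v}(\bm{\hat\theta}_n-\bm\theta_0)$, where $\widehat{\bm J}_{vm}$ and $\widehat{\bm J}_{v}$ are averages of the relevant second derivatives at intermediate points. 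Using (C8) one replaces $\partial_{\bm\theta}\tilde L_{n,v}(\bm\eta_0,\bm\theta_0)$ by $\frac{1}{n}\sum_{t=1}^n\partial_{\bm\theta}\ell_v(Z_t,m_t(\bm\eta_0),v_t(\bm\theta_0))$ at the cost of an $\frac{1}{\sqrt n}\sum_t O(t^{-1/2-\delta})=o(1)$ term, and by (C7), (C10), the ergodic theorem and the two consistencies, $\widehat{\bm J}_{vm}\to\bm J_{vm}$ and $\widehat{\bm J}_{v}\to\bm J_{v}$ a.s., the latter invertible by (C12). Solving for $\sqrt n(\bm{\hat\theta}_n-\bm\theta_0)$, substituting the first-step linearization, and stacking gives
\begin{align*}
\sqrt n\begin{pmatrix}\bm{\hat\eta}_n-\bm\eta_0\\ \bm{\hat\theta}_n-\bm\theta_0\end{pmatrix}
=\begin{pmatrix}-\bm J_m^{-1}&\bm{\mathcal O}\\ \bm J_v^{-1}\bm J_{vm}\bm J_m^{-1}&-\bm J_v^{-1}\end{pmatrix}
\frac{1}{\sqrt n}\sum_{t=1}^n\begin{pmatrix}\partial_{\bm\eta}\ell_m(Z_t,m_t(\bm\eta_0))\\ \partial_{\bm\theta}\ell_v(Z_t,m_t(\bm\eta_0),v_t(\bm\theta_0))\end{pmatrix}+o_p(1),
\end{align*}
with $\bm{\mathcal O}$ the zero matrix. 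By (B10), (C11) and the $\mathcal F_{t-1}$-measurability of $\partial_{\bm\eta}m_t(\bm\eta_0)$ and $\partial_{\bm\theta}v_t(\bm\theta_0)$, the summand is a stationary ergodic martingale-difference sequence with respect to $\{\mathcal F_t\}$ whose covariance matrix has diagonal blocks $\bm I_m$, $\bm I_v$ and off-diagonal block $\bm I_{mv}$; the $(2+\delta)$-moment bounds (B12), (C13) yield the conditional Lindeberg condition, so the martingale central limit theorem (via the Cram\'er--Wold device) gives joint asymptotic normality of the score vector. Conjugating by the displayed matrix, using $\bm J_m^\top=\bm J_m$ and $\bm J_v^\top=\bm J_v$, and multiplying out the blocks reproduces exactly $\bm V=(\bm v_{ij})_{i,j=1,2}$.

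\emph{Main obstacle.} The delicate point will be the propagation of the first-step estimation error through $\ell_v$: I must show that $\widehat{\bm J}_{vm}(\bm{\hat\eta}_n-\bm\eta_0)$ contributes precisely the block $\bm J_{vm}$ in the limit and induces the cross-covariance $\bm I_{mv}$, while the initial-value approximation is negligible at the $\sqrt n$ scale. This is exactly what the summable rates $O(t^{-1/2-\delta})$ in (B7) and (C8) together with the local envelope condition (C10) are designed to deliver; once these are in hand, the remaining work — the uniform-convergence bookkeeping and the algebraic identification of $\bm V$ — is routine.
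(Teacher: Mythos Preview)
Your plan matches the paper's proof almost step for step: initial-value negligibility via (C1)/(C8), plug-in error via a Taylor bound and (C2), mean-value expansion of the $\bm\theta$-score, convergence of $\widehat{\bm J}_{vm},\widehat{\bm J}_v$ via (C10) and the ergodic theorem, invertibility of $\bm J_v$ from (C11)--(C12), and a martingale CLT with Cram\'er--Wold under (B10)--(B12), (C11), (C13). The one point to tighten is your step (iii) in the consistency argument: (C4) only gives $\E\,\ell_v(Z_t,m_t(\bm\eta_0),v_t(\bm\theta_0))<\infty$, not $\E\sup_{\bm\theta}|\ell_v|<\infty$, so a direct uniform LLN is not available; the paper handles this with the Wald-type device (ergodic theorem for possibly non-integrable processes plus Beppo--Levi to shrink neighborhoods), which you should invoke instead.
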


\section{Proof of Theorem \ref{thma1}}\label{suppl2}
\setcounter{equation}{0}
In this section, we prove Theorem \ref{thma1}. We follow the proof of \citet[Theorems 2.1 and 2.2]{af16} and \citet[Theorem 7.1, p.159]{fz10}.

First, we prove the strong consistency of $\bm{\hat \theta}_n$. Assumption (C1) gives that the effect of initial value is asymptotically negligible, that is,
\begin{align*}
&\sup_{\bm\theta\in \bm \Theta}
\left|\tilde L_{n,v}(\bm{\hat \eta}_n,\bm\theta) - L_{n,v}(\bm{\hat \eta}_n,\bm\theta)\right|\\
\leq 
&\frac{1}{n}\sum_{t=1}^n\sup_{(\bm \eta,\bm\theta)\in \bm H\times \bm\Theta}
\left|\ell_v(Z_t,\tilde m_t(\bm{\eta}),\tilde v_t(\bm\theta))-\ell_v(Z_t,m_t(\bm{\eta}), v_t(\bm\theta))\right|\to0 \quad \text{a.s.}
\end{align*}
as $n\to\infty$.
Assumptions (B8) and (C2) and the strong consistency of $\bm\eta_0$ yield that
\begin{align*}
&\sup_{\bm\theta\in \bm \Theta}
\left|L_{n,v}(\bm{\hat \eta}_n,\bm\theta) - L_{n,v}(\bm{ \eta}_0,\bm\theta)\right|\\
\leq 
&\frac{1}{n}\sum_{t=1}^n\sup_{\bm\theta\in \bm \Theta}\left|\ell_v(Z_t, m_t(\bm{\hat \eta}_n),v_t(\bm{\hat \eta}_n,\bm\theta))-\ell_v(Z_t,m_t(\bm{\eta}_0), v_t(\bm\theta))\right|\\
\leq 
&\left\|\bm{\hat \eta}_n-\bm{ \eta}_0\right\|_{\ell_1}
\frac{1}{n}\sum_{t=1}^n\sup_{\bm\theta\in \bm \Theta}\left\|
\frac{\partial}{\partial \bm\eta}\ell_v(Z_t, m_t(\bm{\hat \eta}_n^*),v_t(\bm\theta))\right\|_{\ell_1}
\to0 \quad \text{a.s. as $n\to\infty$,}
\end{align*}
where $\bm{\eta}_0\lesseqgtr\bm{\hat \eta}_n^*\lesseqgtr\bm{\hat \eta}_n$. Now, we have, for any ${\bm \theta}_1$,
\begin{align*}
&\sup_{\bm \theta \in B(\bm \theta_1;r)}\tilde L_{n,v}(\bm{\hat \eta}_n,\bm\theta)\\
=&
\sup_{\bm \theta \in B(\bm \theta_1;r)}\tilde L_{n,v}(\bm{\hat \eta}_n,\bm\theta)
-\sup_{\bm \theta \in B(\bm \theta_1;r)}L_{n,v}(\bm{\hat \eta}_n,\bm\theta)
+\sup_{\bm \theta \in B(\bm \theta_1;r)}L_{n,v}(\bm{\hat \eta}_n,\bm\theta)\\
&-\sup_{\bm \theta \in B(\bm \theta_1;r)}L_{n,v}(\bm{ \eta}_0,\bm\theta)
+\sup_{\bm \theta \in B(\bm \theta_1;r)}L_{n,v}(\bm{ \eta}_0,\bm\theta),
\end{align*}
where $B({\bm \theta}_1;r):=\{{\bm \theta}:\|{\bm \theta}-{\bm \theta}_1 \|_{\ell_1}<1/r\}$, and so by the above argument and the ergodic theorem for non-integrable processes (see \citealp[p.181, Problem 7.3]{fz10}), we observe
\begin{align*}
\limsup_{n\to\infty}\sup_{\bm \theta \in B(\bm \theta_1;r)}\tilde L_{n,v}(\bm{\hat \eta}_n,\bm\theta)
\leq&
{\rm E}\skakko{\sup_{\bm \theta \in B(\bm \theta_1;r)}\ell_v(Z_t, m_t(\bm\eta_0),v_t(\bm\theta))}.
\end{align*}
Here we used the inequality 
$$\sup_{\bm \theta \in B(\bm \theta_1;r)}L_{n,v}(\bm{ \eta}_0,\bm\theta)\leq \frac{1}{n}\sum_{t=1}^n\sup_{\bm \theta \in B(\bm \theta_1;r)}\ell_v(Z_t, m_t(\bm\eta_0),v_t(\bm\theta)).$$
 The Beppo-Levi theorem and Assumption (C3) yield that 
$${\rm E}\skakko{\sup_{\bm \theta \in B(\bm \theta_1;r)}\ell_v(Z_t, m_t(\bm\eta_0),v_t(\bm\theta))}\to
{\rm E}\skakko{\ell_v(Z_t, m_t(\bm\eta_0),v_t(\bm\theta_1))}
\quad \text{as $r\to\infty$}.
$$
Therefore, for any $\bm\theta_1(\neq\bm\theta_0)$, there exists a neighborhood $B(\bm \theta_1)$ satisfying
\begin{align*}
\limsup_{n\to\infty}\sup_{\bm \theta \in B(\bm \theta_1)}\tilde L_{n,v}(\bm{\hat \eta}_n,\bm\theta)
&\leq
{\rm E}\skakko{\ell_v(Z_t, m_t(\bm\eta_0),v_t(\bm{ \eta}_0,\bm\theta_1))}\quad{a.s.}\\
&<
{\rm E}\skakko{\ell_v(Z_t, m_t(\bm\eta_0),v_t(\bm\theta_0))}\quad{a.s.}\\
&=\lim_{n\to\infty} \tilde L_{n,v}(\bm{\hat \eta}_n,\bm\theta_0)\quad{a.s.}
\end{align*}
Here we used Assumption (C5).  
By Assumption (C6), there exists, for any covering set $\{B(\bm \theta_i); i\in\{0\}\cup\mathbb N\}$ of $\Theta$ such that $\bm \theta_i\in\Theta\backslash B(\bm \theta_0)$ for all $i\in\mathbb N$ and $$\limsup_{n\to\infty}\sup_{\bm \theta \in B(\bm \theta_1)}\tilde L_{n,v}(\bm{\hat \eta}_n,\bm\theta)<\lim_{n\to\infty} \tilde L_{n,v}(\bm{\hat \eta}_n,\bm\theta_0),$$
the finite covering set $\{B^\prime(\bm \theta_i);i=0,1,\ldots,s\}$ such that $\Theta:=\cup_{i=0}^sB^\prime(\bm \theta_i)$ and $B^\prime(\bm \theta_0)=B(\bm \theta_0)$. Then, we observe that, for sufficiently large $n$,
\begin{align*}
\sup_{\bm \theta \in \Theta}\tilde L_{n,v}(\bm{\hat \eta}_n,\bm\theta)
=&\max_{i=0,1,\ldots,s}\sup_{\bm \theta \in B^\prime(\bm \theta_i)}\tilde L_{n,v}(\bm{\hat \eta}_n,\bm\theta)\\
=&\sup_{\bm \theta \in B^\prime(\bm \theta_0)}\tilde L_{n,v}(\bm{\hat \eta}_n,\bm\theta)\quad \text{a.s.}
\end{align*}
Since $B(\bm \theta_0)$ can be chosen an arbitrary small set, we obtain the strong consistency of $\bm{\hat \theta}_n$.

Next, we show the asymptotic normality of $(\bm{\hat \eta}_n^\top,\bm{\hat \theta}_n^\top)^\top$. From Assumptions (B6), (B8), (C7), (C8), and (C9), we have
\begin{align}\nonumber
\bm{0}
=&\frac{1}{\sqrt n}\sum_{t=1}^n \frac{\partial}{\partial \bm \theta}\ell_v(Z_t,\tilde m_t(\bm{\hat\eta}_n),\tilde v_t(\bm{\hat \theta}_n))\\\nonumber
=&\frac{1}{\sqrt n}\sum_{t=1}^n\frac{\partial}{\partial \bm \theta} \ell_v(Z_t, m_t(\bm{\hat\eta}_n), v_t(\bm{\hat \theta}_n))+o_p(1)\\\nonumber
=&
\frac{1}{\sqrt n}\sum_{t=1}^n \frac{\partial}{\partial \bm \theta}\ell_v(Z_t, m_t(\bm{\eta}_0), v_t(\bm{\theta}_0))\\\nonumber
&+\frac{1}{ n}\sum_{t=1}^n \frac{\partial^2}{\partial \bm \theta \partial \bm \eta^\top}\ell_v(Z_t, m_t(\bm{\hat\eta}_n^{**}), v_t(\bm{\hat\theta}_n^{**}))\sqrt n(\bm{\hat \eta}_n-\bm{ \eta}_0)\\\nonumber
&+\frac{1}{n}\sum_{t=1}^n \frac{\partial^2}{\partial \bm \theta \partial \bm \theta^\top}\ell_v(Z_t, m_t(\bm{\hat\eta}_n^{**}), v_t(\bm{\hat\theta}_n^{**}))\sqrt n(\bm{\hat \theta}_n-\bm{ \theta}_0)+o_p(1)\\\label{taylor_exp}
&=\frac{1}{\sqrt n}\sum_{t=1}^n \frac{\partial}{\partial \bm \theta}\ell_v(Z_t, m_t(\bm{\eta}_0), v_t(\bm{\theta}_0))+\bm{J}_{vm}^*\sqrt n(\bm{\hat \eta}_n-\bm{ \eta}_0)\\\label{taylor_exp2}
&\quad+\bm{J}_{v}^*\sqrt n(\bm{\hat \theta}_n-\bm{ \theta}_0)+o_p(1),
\end{align}
where $\bm{\eta}_0\lesseqgtr\bm{\hat \eta}_n^{**}\lesseqgtr\bm{\hat \eta}_n$, $\bm{\theta}_0\lesseqgtr\bm{\hat \eta}_n^{**}\lesseqgtr\bm{\hat \theta}_n$, 
$$\bm{J}^*_{vm}:=\frac{1}{ n}\sum_{t=1}^n \frac{\partial^2}{\partial \bm \theta \partial \bm \eta^\top}\ell_v(Z_t, m_t(\bm{\hat\eta}_n^{**}), v_t(\bm{\hat\theta}_n^{**})),$$ and 
$$\bm{J}_{v}^*:=\frac{1}{n}\sum_{t=1}^n \frac{\partial^2}{\partial \bm \theta \partial \bm \theta^\top}\ell_v(Z_t, m_t(\bm{\hat\eta}_n^{**}), v_t(\bm{\hat\theta}_n^{**})).$$
The assumption (C10) gives that, for some $r$ such that $B(\bm \eta_0;r)\times B(\bm \theta_0;r)\subset B_1(\bm \eta_0)\times B_2(\bm \theta_0)$ and for large $n$ such that  $\bm{\hat \eta}_n^{**}\times\bm{\hat \theta}_n^{**}\in B(\bm \eta_0;r)\times B(\bm \theta_0;r)$,
\begin{align*}
&\left|\frac{1}{ n}\sum_{t=1}^n \frac{\partial^2}{\partial \theta_{ij} \partial \eta_{ij}^\top}\ell_v(Z_t, m_t(\bm{\hat\eta}_n^{**}), v_t(\bm{\hat\theta}_n^{**}))-
{\rm E}\frac{\partial^2}{\partial \theta_{ij} \partial \eta_{ij}^\top}\ell_v(Z_t, m_t(\bm{\eta}_0), v_t(\bm{\theta}_0))\right|\\
\leq&
\frac{1}{ n}\sum_{t=1}^n\sup_{(\bm \eta,\bm \theta) \in B(\bm \eta_0;r)\times B(\bm \theta_0;r)}\\
&\left| \frac{\partial^2}{\partial \theta_{ij} \partial \eta_{ij}^\top}\ell_v(Z_t, m_t(\bm{\eta}), v_t(\bm{\theta}))-
{\rm E}\frac{\partial^2}{\partial \theta_{ij} \partial \eta_{ij}^\top}\ell_v(Z_t, m_t(\bm{\eta}_0), v_t(\bm{\theta}_0))\right|\\
\to&
{\rm E}\sup_{(\bm \eta,\bm \theta) \in B(\bm \eta_0;r)\times B(\bm \theta_0;r)}\\
&\left| \frac{\partial^2}{\partial \theta_{ij} \partial \eta_{ij}^\top}\ell_v(Z_t, m_t(\bm{\eta}), v_t(\bm{\theta}))-
{\rm E}\frac{\partial^2}{\partial \theta_{ij} \partial \eta_{ij}^\top}\ell_v(Z_t, m_t(\bm{\eta}_0), v_t(\bm{\theta}_0))\right|
\end{align*}
a.s. as $n\to\infty$. By applying the Beppo-Levi theorem, we have
\begin{align*}
&{\rm E}\sup_{(\bm \eta,\bm \theta) \in B(\bm \eta_0;r)\times B(\bm \theta_0;r)}\\
&\left| \frac{\partial^2}{\partial \theta_{ij} \partial \eta_{ij}^\top}\ell_v(Z_t, m_t(\bm{\eta}), v_t(\bm{\theta}))-
{\rm E}\frac{\partial^2}{\partial \theta_{ij} \partial \eta_{ij}^\top}\ell_v(Z_t, m_t(\bm{\eta}_0), v_t(\bm{\theta}_0))\right|
\to 0
\end{align*}
a.s. as $r\to\infty$, and thus, $\bm{J}_{vm}^*$ converges almost surely to $\bm{J}_{vm}$ as $n\to\infty$, where
$$\bm{J}_{vm}:={\rm E}\skakko{\frac{\partial^2}{\partial \bm \theta \partial \bm \eta^\top}\ell_v(Z_t, m_t(\bm{\eta}_0), v_t(\bm{\theta}_0))}.$$
 Similarly, we obtain $\bm{J}_{v}^*$ converges almost surely to $\bm{J}_{v}$ as $n\to\infty$, where
\begin{align*}
\bm{J}_{v}:={\rm E}\frac{\partial^2}{\partial \bm \theta \partial \bm \theta^\top}\ell_v(Z_t, m_t(\bm{\eta}_0), v_t(\bm{\theta}_0)).
\end{align*}
The non-singularity of  $\bm{J}_{v}$ is ensured by Assumptions  (C12). Actually, we observe
\begin{align*}
\bm{J}_{v}:=&{\rm E}\skakko{\frac{\partial^2}{\partial \bm \theta \partial \bm \theta^\top}\ell_v(Z_t, m_t(\bm{\eta}_0), v_t(\bm{\theta}_0))}\\
=&{\rm E}\lkakko{
\frac{\partial}{\partial \bm \theta^\top}\mkakko{
\skakko{\frac{\partial}{\partial \bm \theta}v_t(\bm{\theta}_0)}
\ell_v^\prime(Z_t, m_t(\bm{\eta}_0), v_t(\bm{\theta}_0))
}}\\
=&{\rm E}\left\{
\skakko{\frac{\partial}{\partial \bm \theta}v_t(\bm{\theta}_0)}
\skakko{\frac{\partial}{\partial \bm \theta^\top}v_t(\bm{\theta}_0)}
\ell_v^{\prime\prime}(Z_t, m_t(\bm{\eta}_0), v_t(\bm{\theta}_0))\right.\\
&\left.+
\skakko{\frac{\partial^2}{\partial \bm \theta \partial \bm \theta^\top}v_t(\bm{\theta}_0)}
\ell_v^\prime(Z_t, m_t(\bm{\eta}_0), v_t(\bm{\theta}_0))
\right\}\\
=&
{\rm E}\mkakko{
\skakko{\frac{\partial}{\partial \bm \theta}v_t(\bm{\theta}_0)}
\skakko{\frac{\partial}{\partial \bm \theta^\top}v_t(\bm{\theta}_0)}
\ell_v^{\prime\prime}(Z_t, m_t(\bm{\eta}_0), v_t(\bm{\theta}_0))}.
\end{align*}
For any $\bm s\in\mathbb R^{d^\prime}$, it holds 
\begin{align*}
{\bm s}^\top \bm{J}_{v}{\bm s}
=&{\rm E}\mkakko{
\left|{\bm s}^\top\frac{\partial}{\partial \bm \theta}v_t(\bm{\theta}_0)\right|^2
{\rm E}\skakko{\ell_v^{\prime\prime}(Z_t, m_t(\bm{\eta}_0), v_t(\bm{\theta}_0))\mid \mathcal F_{t-1}}}\geq0,
\end{align*}
which shows that $\bm{J}_{v}$ is a non-negative definite matrix.
Suppose ${\bm s}^\top \bm{J}_{v}{\bm s}=0$. Then, we have 
${\bm s}^\top\frac{\partial}{\partial \bm \theta}v_t(\bm{\theta}_0)=0$, which by Assumption (C11) and (C12), gives ${\bm s}=\bm 0$. This implies that $\bm{J}_{v}$ is a positive definite matrix. 
Similarly, $\bm{J}_{m}$ is a positive definite matrix by Assumption  (B11).
From \eqref{taylor_exp} and \eqref{taylor_exp2}, it follows that
\begin{align*}
&\begin{pmatrix}
\sqrt n(\bm{\hat \eta}_n-\bm{ \eta}_0)\\
\sqrt n(\bm{\hat \theta}_n-\bm{ \theta}_0)
\end{pmatrix}\\
&=
\begin{pmatrix}
\sqrt n(\bm{\hat \eta}_n-\bm{ \eta}_0)\\
-{\bm{J}_{v}^*}^{-1}\frac{1}{\sqrt n}\sum_{t=1}^n \frac{\partial}{\partial \bm \theta}\ell_v(Z_t, m_t(\bm{\eta}_0), v_t(\bm{\theta}_0))
-{\bm{J}_{v}^*}^{-1}\bm{J}_{vm}^*\sqrt n(\bm{\hat \eta}_n-\bm{ \eta}_0)
\end{pmatrix}\\
&\quad+o_p(1)\\
&=
\begin{pmatrix}
\bm{\mathcal  I}_p&\bm {\mathcal O}_p\\
-{\bm{J}_{v}^*}^{-1}\bm{J}_{vm}^*&-{\bm{J}_{v}^*}^{-1}
\end{pmatrix}
\begin{pmatrix}
\sqrt n(\bm{\hat \eta}_n-\bm{ \eta}_0)\\
\frac{1}{\sqrt n}\sum_{t=1}^n \frac{\partial}{\partial \bm \theta}\ell_v(Z_t, m_t(\bm{\eta}_0), v_t(\bm{\theta}_0))
\end{pmatrix}+o_p(1)\\
&=
\begin{pmatrix}
\bm{\mathcal  I}_p&\bm {\mathcal O}_p\\
-\bm{J}_{v}^{-1}\bm{J}_{vm}&-\bm{J}_{v}^{-1}
\end{pmatrix}
\begin{pmatrix}
-\bm J_m^{-1}\frac{1}{\sqrt n}\sum_{t=1}^n \frac{\partial}{\partial \bm \eta}\ell_m(Z_t, m_t(\bm{\eta}_0))
\\
\frac{1}{\sqrt n}\sum_{t=1}^n \frac{\partial}{\partial \bm \theta}\ell_v(Z_t, m_t(\bm{\eta}_0), v_t(\bm{\theta}_0))
\end{pmatrix}+o_p(1),
\end{align*}
where $\bm{\mathcal  I}_p$ is an identity matrix of size $p$ and $\bm {\mathcal O}_p$ is a $p$-by-$p$ zero matrix.
The proof will be complete by
\begin{align*}
&\begin{pmatrix}
\bm{\mathcal  I}_p&\bm {\mathcal O}_p\\
-\bm{J}_{v}^{-1}\bm{J}_{vm}&-\bm{J}_{v}^{-1}
\end{pmatrix}
\begin{pmatrix}
\bm J_m^{-1}\bm I_m\bm J_m^{-1}&-\bm J_m^{-1}\bm I_{mv}\\
-\bm I_{mv}^{\top}\bm J_m^{-1}&\bm I_v\\
\end{pmatrix}
\begin{pmatrix}
\bm{\mathcal  I}_p&\bm {\mathcal O}_p\\
-\bm{ J}_{v}^{-1}\bm{J}_{vm}&-\bm{ J}_{v}^{-1}
\end{pmatrix}^\top\\
&=\bm V,
\end{align*}
provided we can show that
\begin{align}\label{CLT}
&\begin{pmatrix}
\bm J_m^{-1}\frac{1}{\sqrt n}\sum_{t=1}^n \frac{\partial}{\partial \bm \eta}\ell_m(Z_t, m_t(\bm{\eta}_0))
\\
\frac{1}{\sqrt n}\sum_{t=1}^n \frac{\partial}{\partial \bm \theta}\ell_v(Z_t, m_t(\bm{\eta}_0), v_t(\bm{\theta}_0))
\end{pmatrix}\\\label{CLT2}
&\quad\Rightarrow
N\skakko{\bm 0_{d+d^\prime},
\begin{pmatrix}
\bm J_m^{-1}\bm I_m\bm J_m^{-1}&-\bm J_m^{-1}\bm I_{mv}\\
-\bm I_{mv}^{\top}\bm J_m^{-1}&\bm I_v\\
\end{pmatrix}
}
\end{align}
as $n\to\infty$, where
\begin{align*}
\bm I_{mv}:=&{\rm E}\skakko{\frac{\partial}{\partial \bm \eta}\ell_m(Z_t, m_t(\bm{\eta}_0))\frac{\partial}{\partial \bm \theta^\top}\ell_v(Z_t, m_t(\bm{\eta}_0), v_t(\bm{\theta}_0))}\\
\text{and }
\bm I_v :=&{\rm E}\skakko{\frac{\partial}{\partial \bm \theta}\ell_v(Z_t, m_t(\bm{\eta}_0), v_t(\bm{\theta}_0))\frac{\partial}{\partial \bm \theta^\top}\ell_v(Z_t, m_t(\bm{\eta}_0), v_t(\bm{\theta}_0))}.
\end{align*}
From Assumptions (B12) and (C11), we know that 
\begin{align*}
\begin{pmatrix}
\bm J_m^{-1}\frac{1}{\sqrt n}\sum_{t=1}^n \frac{\partial}{\partial \bm \eta}\ell_m(Z_t, m_t(\bm{\eta}_0))
\\
\frac{1}{\sqrt n}\sum_{t=1}^n \frac{\partial}{\partial \bm \theta}\ell_v(Z_t, m_t(\bm{\eta}_0), v_t(\bm{\theta}_0))
\end{pmatrix}
\end{align*}
is a martingale difference sequence. Assumptions (B12) and (C13) ensures the Lindeberg condition: for any constant $(\bm c_1^\top,\bm c_2^\top)^\top\in\mathbb R^{d+d^\prime}$,
\begin{align*}
&\frac{1}{\sqrt n}\sum_{t=1}^n{\rm E}\Bigg[\bigg(c_1^\top\bm J_m^{-1}\sum_{t=1}^n \frac{\partial}{\partial \bm \eta}\ell_m(Z_t, m_t(\bm{\eta}_0))\\
&\quad\quad\quad\quad\quad\quad+
c_2^\top\sum_{t=1}^n \frac{\partial}{\partial \bm \theta}\ell_v(Z_t, m_t(\bm{\eta}_0), v_t(\bm{\theta}_0))\bigg)^2\\
&\times\mathbb I\Bigg\{\frac{1}{\sqrt n}\Bigg|c_1^\top\bm J_m^{-1}\sum_{t=1}^n \frac{\partial}{\partial \bm \eta}\ell_m(Z_t, m_t(\bm{\eta}_0))\\
&\quad\quad\quad\quad\quad\quad+
c_2^\top\sum_{t=1}^n \frac{\partial}{\partial \bm \theta}\ell_v(Z_t, m_t(\bm{\eta}_0), v_t(\bm{\theta}_0))\Bigg|>\epsilon\Bigg\}\Bigg]\\
&=
{\rm E}\Bigg[\bigg(c_1^\top\bm J_m^{-1}\sum_{t=1}^n \frac{\partial}{\partial \bm \eta}\ell_m(Z_t, m_t(\bm{\eta}_0))+
c_2^\top\sum_{t=1}^n \frac{\partial}{\partial \bm \theta}\ell_v(Z_t, m_t(\bm{\eta}_0), v_t(\bm{\theta}_0))\bigg)^2\\
&\times\mathbb I\Bigg\{\frac{1}{(\epsilon\sqrt n)^\delta}\Bigg|c_1^\top\bm J_m^{-1}\sum_{t=1}^n \frac{\partial}{\partial \bm \eta}\ell_m(Z_t, m_t(\bm{\eta}_0))\\
&\quad\quad\quad\quad\quad\quad+
c_2^\top\sum_{t=1}^n \frac{\partial}{\partial \bm \theta}\ell_v(Z_t, m_t(\bm{\eta}_0), v_t(\bm{\theta}_0))\Bigg|^\delta>1\Bigg\}\Bigg],\\
&\frac{1}{(\epsilon\sqrt n)^\delta}{\rm E}\Bigg|c_1^\top\bm J_m^{-1}\sum_{t=1}^n \frac{\partial}{\partial \bm \eta}\ell_m(Z_t, m_t(\bm{\eta}_0))
+c_2^\top\sum_{t=1}^n \frac{\partial}{\partial \bm \theta}\ell_v(Z_t, m_t(\bm{\eta}_0), v_t(\bm{\theta}_0))\Bigg|^{2+\delta},
\end{align*}
which tends to zero as $n\to\infty$.

The ergodic theorem yields that 
\begin{align*}
&\frac{1}{\sqrt n}\sum_{t=1}^n{\rm E}\Bigg\{\bigg(c_1^\top\bm J_m^{-1}\sum_{t=1}^n \frac{\partial}{\partial \bm \eta}\ell_m(Z_t, m_t(\bm{\eta}_0))\\
&\quad\quad\quad\quad\quad\quad+
c_2^\top\sum_{t=1}^n \frac{\partial}{\partial \bm \theta}\ell_v(Z_t, m_t(\bm{\eta}_0), v_t(\bm{\theta}_0))\bigg)^2\mid \mathcal F_{t-1}\Bigg\}\\
&\to
{\rm E}\bigg(c_1^\top\bm J_m^{-1}\sum_{t=1}^n \frac{\partial}{\partial \bm \eta}\ell_m(Z_t, m_t(\bm{\eta}_0))+
c_2^\top\sum_{t=1}^n \frac{\partial}{\partial \bm \theta}\ell_v(Z_t, m_t(\bm{\eta}_0), v_t(\bm{\theta}_0))\bigg)^2
\end{align*}
as $n\to\infty$.
Therefore, the Cramer--Wold device and the martingale central limit theorem yield \eqref{CLT} and \eqref{CLT2}.

\section*{Acknowledgements}
Authors would like to express sincere thanks to anonymous reviewers, which greatly improved the earlier version of this paper.

\bibliographystyle{imsart-nameyear}
\bibliography{ref}

\begin{thebibliography}{42}
% BibTex style file: imsart-nameyear.bst, 2017-11-03
% Default style options (sort=1,type=nameyear).
% Used options (sort=1,type=nameyear).

\bibitem[\protect\citeauthoryear{Ahmad and Francq}{2016}]{af16}
\begin{barticle}[author]
\bauthor{\bsnm{Ahmad},~\bfnm{Ali}\binits{A.}} \AND
  \bauthor{\bsnm{Francq},~\bfnm{Christian}\binits{C.}}
(\byear{2016}).
\btitle{{P}oisson {QMLE} of count time series models}.
\bjournal{J. Time Series Anal.}
\bvolume{37}
\bpages{291--314}.
\end{barticle}
\endbibitem

\bibitem[\protect\citeauthoryear{Aknouche and Francq}{2021}]{ac21}
\begin{barticle}[author]
\bauthor{\bsnm{Aknouche},~\bfnm{Abdelhakim}\binits{A.}} \AND
  \bauthor{\bsnm{Francq},~\bfnm{Christian}\binits{C.}}
(\byear{2021}).
\btitle{Two-stage weighted least squares estimator of the conditional mean of
  observation-driven time series models}.
\bjournal{J. Econometrics}
\bvolume{237}
\bpages{105174}.
\end{barticle}
\endbibitem

\bibitem[\protect\citeauthoryear{Al-Osh and Alzaid}{1987}]{aa87}
\begin{barticle}[author]
\bauthor{\bsnm{Al-Osh},~\bfnm{Mohamed~A}\binits{M.~A.}} \AND
  \bauthor{\bsnm{Alzaid},~\bfnm{Aus~A}\binits{A.~A.}}
(\byear{1987}).
\btitle{First-order integer-valued autoregressive ({INAR} (1)) process}.
\bjournal{J. Time Ser. Anal.}
\bvolume{8}
\bpages{261--275}.
\end{barticle}
\endbibitem

\bibitem[\protect\citeauthoryear{Aleksandrov, Wei{\ss} and
  Jentsch}{2022}]{awj22}
\begin{barticle}[author]
\bauthor{\bsnm{Aleksandrov},~\bfnm{Boris}\binits{B.}},
  \bauthor{\bsnm{Wei{\ss}},~\bfnm{Christian~H}\binits{C.~H.}} \AND
  \bauthor{\bsnm{Jentsch},~\bfnm{Carsten}\binits{C.}}
(\byear{2022}).
\btitle{Goodness-of-fit tests for {P}oisson count time series based on the
  {S}tein--{C}hen identity}.
\bjournal{Stat. Neerl.}
\bvolume{76}
\bpages{35--64}.
\end{barticle}
\endbibitem

\bibitem[\protect\citeauthoryear{Aly and Bouzar}{2019}]{ab19}
\begin{barticle}[author]
\bauthor{\bsnm{Aly},~\bfnm{Emad-Eldin~AA}\binits{E.-E.~A.}} \AND
  \bauthor{\bsnm{Bouzar},~\bfnm{Nadjib}\binits{N.}}
(\byear{2019}).
\btitle{Expectation thinning operators based on linear fractional probability
  generating functions}.
\bjournal{J. Indian Soc. Probab. Stat.}
\bvolume{20}
\bpages{89--107}.
\end{barticle}
\endbibitem

\bibitem[\protect\citeauthoryear{B{\"o}hning and van~der Heijden}{2019}]{bh19}
\begin{barticle}[author]
\bauthor{\bsnm{B{\"o}hning},~\bfnm{Dankmar}\binits{D.}} \AND
  \bauthor{\bparticle{van~der} \bsnm{Heijden},~\bfnm{Peter
  G.~M.}\binits{P.~G.~M.}}
(\byear{2019}).
\btitle{The identity of the zero-truncated, one-inflated likelihood and the
  zero-one-truncated likelihood for general count densities with an application
  to drink-driving in Britain}.
\bjournal{Ann. Appl. Stat.}
\bvolume{13}
\bpages{1198 -- 1211}.
\end{barticle}
\endbibitem

\bibitem[\protect\citeauthoryear{Bourguignon and Wei{\ss}}{2017}]{bw17}
\begin{barticle}[author]
\bauthor{\bsnm{Bourguignon},~\bfnm{Marcelo}\binits{M.}} \AND
  \bauthor{\bsnm{Wei{\ss}},~\bfnm{Christian~H}\binits{C.~H.}}
(\byear{2017}).
\btitle{An {INAR} (1) process for modeling count time series with
  equidispersion, underdispersion and overdispersion}.
\bjournal{Test}
\bvolume{26}
\bpages{847--868}.
\end{barticle}
\endbibitem

\bibitem[\protect\citeauthoryear{Brockwell and Davis}{2009}]{bd09}
\begin{bbook}[author]
\bauthor{\bsnm{Brockwell},~\bfnm{Peter~J}\binits{P.~J.}} \AND
  \bauthor{\bsnm{Davis},~\bfnm{Richard~A}\binits{R.~A.}}
(\byear{2009}).
\btitle{Time series: theory and methods}.
\bpublisher{Springer science \& business media}.
\end{bbook}
\endbibitem

\bibitem[\protect\citeauthoryear{Carlstein}{1986}]{c86}
\begin{barticle}[author]
\bauthor{\bsnm{Carlstein},~\bfnm{Edward}\binits{E.}}
(\byear{1986}).
\btitle{The use of subseries values for estimating the variance of a general
  statistic from a stationary sequence}.
\bjournal{Ann. Statist.}
\bpages{1171--1179}.
\end{barticle}
\endbibitem

\bibitem[\protect\citeauthoryear{Chen and Lee}{2017}]{cs17}
\begin{barticle}[author]
\bauthor{\bsnm{Chen},~\bfnm{Cathy~WS}\binits{C.~W.}} \AND
  \bauthor{\bsnm{Lee},~\bfnm{Sangyeol}\binits{S.}}
(\byear{2017}).
\btitle{Bayesian causality test for integer-valued time series models with
  applications to climate and crime data}.
\bjournal{J. R. Stat. Soc. Ser. C}
\bvolume{66}
\bpages{797--814}.
\end{barticle}
\endbibitem

\bibitem[\protect\citeauthoryear{Davis et~al.}{2021}]{davisetal21}
\begin{barticle}[author]
\bauthor{\bsnm{Davis},~\bfnm{Richard~A}\binits{R.~A.}},
  \bauthor{\bsnm{Fokianos},~\bfnm{Konstantinos}\binits{K.}},
  \bauthor{\bsnm{Holan},~\bfnm{Scott~H}\binits{S.~H.}},
  \bauthor{\bsnm{Joe},~\bfnm{Harry}\binits{H.}},
  \bauthor{\bsnm{Livsey},~\bfnm{James}\binits{J.}},
  \bauthor{\bsnm{Lund},~\bfnm{Robert}\binits{R.}},
  \bauthor{\bsnm{Pipiras},~\bfnm{Vladas}\binits{V.}} \AND
  \bauthor{\bsnm{Ravishanker},~\bfnm{Nalini}\binits{N.}}
(\byear{2021}).
\btitle{Count time series: A methodological review}.
\bjournal{J. Amer. Statist. Assoc.}
\bvolume{116}
\bpages{1533--1547}.
\end{barticle}
\endbibitem

\bibitem[\protect\citeauthoryear{Doukhan, Fokianos and Li}{2012}]{dfl12}
\begin{barticle}[author]
\bauthor{\bsnm{Doukhan},~\bfnm{Paul}\binits{P.}},
  \bauthor{\bsnm{Fokianos},~\bfnm{Konstantinos}\binits{K.}} \AND
  \bauthor{\bsnm{Li},~\bfnm{Xiaoyin}\binits{X.}}
(\byear{2012}).
\btitle{On weak dependence conditions: The case of discrete valued processes}.
\bjournal{Stat. Probab. Lett.}
\bvolume{82}
\bpages{1941--1948}.
\end{barticle}
\endbibitem

\bibitem[\protect\citeauthoryear{Doukhan, Latour and Oraichi}{2006}]{dlo06}
\begin{barticle}[author]
\bauthor{\bsnm{Doukhan},~\bfnm{Paul}\binits{P.}},
  \bauthor{\bsnm{Latour},~\bfnm{Alain}\binits{A.}} \AND
  \bauthor{\bsnm{Oraichi},~\bfnm{D}\binits{D.}}
(\byear{2006}).
\btitle{A simple integer-valued bilinear time series model}.
\bjournal{Adv. Appl. Probab.}
\bvolume{38}
\bpages{559--578}.
\end{barticle}
\endbibitem

\bibitem[\protect\citeauthoryear{Drost, Van~den Akker and
  Werker}{2008}]{dvrw08}
\begin{barticle}[author]
\bauthor{\bsnm{Drost},~\bfnm{Feike~C}\binits{F.~C.}},
  \bauthor{\bparticle{Van~den} \bsnm{Akker},~\bfnm{Ramon}\binits{R.}} \AND
  \bauthor{\bsnm{Werker},~\bfnm{Bas~JM}\binits{B.~J.}}
(\byear{2008}).
\btitle{Note on integer-valued bilinear time series models}.
\bjournal{Stat. Probab. Lett.}
\bvolume{78}
\bpages{992--996}.
\end{barticle}
\endbibitem

\bibitem[\protect\citeauthoryear{Ferland, Latour and Oraichi}{2006}]{flo06}
\begin{barticle}[author]
\bauthor{\bsnm{Ferland},~\bfnm{Ren{\'e}}\binits{R.}},
  \bauthor{\bsnm{Latour},~\bfnm{Alain}\binits{A.}} \AND
  \bauthor{\bsnm{Oraichi},~\bfnm{Driss}\binits{D.}}
(\byear{2006}).
\btitle{Integer-valued {GARCH} process}.
\bjournal{J. Time Ser. Anal.}
\bvolume{27}
\bpages{923--942}.
\end{barticle}
\endbibitem

\bibitem[\protect\citeauthoryear{Fokianos and Neumann}{2013}]{fn13}
\begin{barticle}[author]
\bauthor{\bsnm{Fokianos},~\bfnm{Konstantinos}\binits{K.}} \AND
  \bauthor{\bsnm{Neumann},~\bfnm{Michael~H.}\binits{M.~H.}}
(\byear{2013}).
\btitle{A goodness-of-fit test for {P}oisson count processes}.
\bjournal{Electron. J. Statist.}
\bvolume{7}
\bpages{793--819}.
\end{barticle}
\endbibitem

\bibitem[\protect\citeauthoryear{Francq and Zakoian}{2010}]{fz10}
\begin{bbook}[author]
\bauthor{\bsnm{Francq},~\bfnm{Christian}\binits{C.}} \AND
  \bauthor{\bsnm{Zakoian},~\bfnm{Jean~Michel}\binits{J.~M.}}
(\byear{2010}).
\btitle{{GARCH} models: structure, statistical inference and financial
  applications}.
\bpublisher{John Wiley \& Sons}.
\end{bbook}
\endbibitem

\bibitem[\protect\citeauthoryear{Goto and Fujimori}{2023}]{gf23}
\begin{barticle}[author]
\bauthor{\bsnm{Goto},~\bfnm{Y.}\binits{Y.}} \AND
  \bauthor{\bsnm{Fujimori},~\bfnm{K.}\binits{K.}}
(\byear{2023}).
\btitle{Test for conditional variance of integer-valued time series}.
\bjournal{Stat. Sin.}
\bvolume{32}.
\end{barticle}
\endbibitem

\bibitem[\protect\citeauthoryear{Hudecov{\'a}, Hu{\v{s}}kov{\'a} and
  Meintanis}{2015}]{hhm15}
\begin{barticle}[author]
\bauthor{\bsnm{Hudecov{\'a}},~\bfnm{{\v{S}}{\'a}rka}\binits{{\v{S}}.}},
  \bauthor{\bsnm{Hu{\v{s}}kov{\'a}},~\bfnm{Marie}\binits{M.}} \AND
  \bauthor{\bsnm{Meintanis},~\bfnm{Simos~G}\binits{S.~G.}}
(\byear{2015}).
\btitle{Tests for time series of counts based on the probability-generating
  function}.
\bjournal{Statistics}
\bvolume{49}
\bpages{316--337}.
\end{barticle}
\endbibitem

\bibitem[\protect\citeauthoryear{Hudecov{\'a}, Hu{\v{s}}kov{\'a} and
  Meintanis}{2021}]{hhm21}
\begin{barticle}[author]
\bauthor{\bsnm{Hudecov{\'a}},~\bfnm{{\v{S}}{\'a}rka}\binits{{\v{S}}.}},
  \bauthor{\bsnm{Hu{\v{s}}kov{\'a}},~\bfnm{Marie}\binits{M.}} \AND
  \bauthor{\bsnm{Meintanis},~\bfnm{Simos~G}\binits{S.~G.}}
(\byear{2021}).
\btitle{Goodness--of--fit tests for bivariate time series of counts}.
\bjournal{Econometrics}
\bvolume{9}
\bpages{10}.
\end{barticle}
\endbibitem

\bibitem[\protect\citeauthoryear{Jazi, Jones and Lai}{2012}]{jjl12}
\begin{barticle}[author]
\bauthor{\bsnm{Jazi},~\bfnm{Mansour~Aghababaei}\binits{M.~A.}},
  \bauthor{\bsnm{Jones},~\bfnm{Geoff}\binits{G.}} \AND
  \bauthor{\bsnm{Lai},~\bfnm{Chin-Diew}\binits{C.-D.}}
(\byear{2012}).
\btitle{First-order integer valued AR processes with zero inflated Poisson
  innovations}.
\bjournal{J. Time Ser. Anal.}
\bvolume{33}
\bpages{954--963}.
\end{barticle}
\endbibitem

\bibitem[\protect\citeauthoryear{Kachour and Truquet}{2011}]{kt11}
\begin{barticle}[author]
\bauthor{\bsnm{Kachour},~\bfnm{M}\binits{M.}} \AND
  \bauthor{\bsnm{Truquet},~\bfnm{L}\binits{L.}}
(\byear{2011}).
\btitle{A p-Order signed integer-valued autoregressive ({SINAR} (p)) model}.
\bjournal{J. Time Ser. Anal.}
\bvolume{32}
\bpages{223--236}.
\end{barticle}
\endbibitem

\bibitem[\protect\citeauthoryear{Klimko and Nelson}{1978}]{kn78}
\begin{barticle}[author]
\bauthor{\bsnm{Klimko},~\bfnm{Lawrence~A}\binits{L.~A.}} \AND
  \bauthor{\bsnm{Nelson},~\bfnm{Paul~I}\binits{P.~I.}}
(\byear{1978}).
\btitle{On conditional least squares estimation for stochastic processes}.
\bjournal{Ann. Statist.}
\bpages{629--642}.
\end{barticle}
\endbibitem

\bibitem[\protect\citeauthoryear{Latour}{1997}]{latour97}
\begin{barticle}[author]
\bauthor{\bsnm{Latour},~\bfnm{Alain}\binits{A.}}
(\byear{1997}).
\btitle{The multivariate {GINAR} (p) process}.
\bjournal{Adv. Appl. Probab.}
\bvolume{29}
\bpages{228--248}.
\end{barticle}
\endbibitem

\bibitem[\protect\citeauthoryear{Latour and Truquet}{2009}]{lt09}
\begin{bmisc}[author]
\bauthor{\bsnm{Latour},~\bfnm{Alain}\binits{A.}} \AND
  \bauthor{\bsnm{Truquet},~\bfnm{Lionel}\binits{L.}}
(\byear{2009}).
\btitle{An integer-valued bilinear type model}.
\end{bmisc}
\endbibitem

\bibitem[\protect\citeauthoryear{Leucht and Neumann}{2013}]{ln13}
\begin{barticle}[author]
\bauthor{\bsnm{Leucht},~\bfnm{Anne}\binits{A.}} \AND
  \bauthor{\bsnm{Neumann},~\bfnm{Michael~H}\binits{M.~H.}}
(\byear{2013}).
\btitle{Degenerate $ U $-and $ V $-statistics under ergodicity: asymptotics,
  bootstrap and applications in statistics}.
\bjournal{Ann. Inst. Statist. Math.}
\bvolume{65}
\bpages{349--386}.
\end{barticle}
\endbibitem

\bibitem[\protect\citeauthoryear{Li, Chen and Zhu}{2023}]{lcz23}
\begin{barticle}[author]
\bauthor{\bsnm{Li},~\bfnm{Qi}\binits{Q.}},
  \bauthor{\bsnm{Chen},~\bfnm{Huaping}\binits{H.}} \AND
  \bauthor{\bsnm{Zhu},~\bfnm{Fukang}\binits{F.}}
(\byear{2023}).
\btitle{Z-valued time series: {M}odels, properties and comparison}.
\bjournal{J. Statist. Plann. Inference}
\bpages{106099}.
\end{barticle}
\endbibitem

\bibitem[\protect\citeauthoryear{McKenzie}{1985}]{mckenzie85}
\begin{barticle}[author]
\bauthor{\bsnm{McKenzie},~\bfnm{Ed}\binits{E.}}
(\byear{1985}).
\btitle{Some simple models for discrete variate time series}.
\bjournal{J. Am. Water Resour. Assoc.}
\bvolume{21}
\bpages{645--650}.
\end{barticle}
\endbibitem

\bibitem[\protect\citeauthoryear{Meintanis and Karlis}{2014}]{mk14}
\begin{barticle}[author]
\bauthor{\bsnm{Meintanis},~\bfnm{Simos~G}\binits{S.~G.}} \AND
  \bauthor{\bsnm{Karlis},~\bfnm{Dimitris}\binits{D.}}
(\byear{2014}).
\btitle{Validation tests for the innovation distribution in {INAR} time series
  models}.
\bjournal{Comput. Statist.}
\bvolume{29}
\bpages{1221--1241}.
\end{barticle}
\endbibitem

\bibitem[\protect\citeauthoryear{Mohammadpour, Bakouch and
  Shirozhan}{2018}]{mbs18}
\begin{barticle}[author]
\bauthor{\bsnm{Mohammadpour},~\bfnm{M.}\binits{M.}},
  \bauthor{\bsnm{Bakouch},~\bfnm{Hassan~S.}\binits{H.~S.}} \AND
  \bauthor{\bsnm{Shirozhan},~\bfnm{M.}\binits{M.}}
(\byear{2018}).
\btitle{Poisson--{L}indley {INAR}(1) model with applications}.
\bjournal{Braz. J. Probab. Stat.}
\bvolume{32}
\bpages{262 -- 280}.
\end{barticle}
\endbibitem

\bibitem[\protect\citeauthoryear{Neumann}{2011}]{n11}
\begin{barticle}[author]
\bauthor{\bsnm{Neumann},~\bfnm{Michael~H}\binits{M.~H.}}
(\byear{2011}).
\btitle{Absolute regularity and ergodicity of {p}oisson count processes}.
\bjournal{Bernoulli}
\bvolume{17}
\bpages{1268--1284}.
\end{barticle}
\endbibitem

\bibitem[\protect\citeauthoryear{Nishiyama}{2022}]{nishiyama21}
\begin{bbook}[author]
\bauthor{\bsnm{Nishiyama},~\bfnm{Yoichi}\binits{Y.}}
(\byear{2022}).
\btitle{Martingale methods in statistics}.
\bseries{Monographs on Statistics and Applied Probability}
\bvolume{170}.
\bpublisher{CRC Press, Boca Raton, FL}.
\end{bbook}
\endbibitem

\bibitem[\protect\citeauthoryear{Peligrad and Shao}{1994}]{ps94}
\begin{barticle}[author]
\bauthor{\bsnm{Peligrad},~\bfnm{Magda}\binits{M.}} \AND
  \bauthor{\bsnm{Shao},~\bfnm{Qi-Man}\binits{Q.-M.}}
(\byear{1994}).
\btitle{Self-normalized central limit theorem for sums of weakly dependent
  random variables}.
\bjournal{J. Theor. Probab.}
\bvolume{7}
\bpages{309--338}.
\end{barticle}
\endbibitem

\bibitem[\protect\citeauthoryear{Risti{\'c}, Bakouch and
  Nasti{\'c}}{2009}]{rbn09}
\begin{barticle}[author]
\bauthor{\bsnm{Risti{\'c}},~\bfnm{Miroslav~M}\binits{M.~M.}},
  \bauthor{\bsnm{Bakouch},~\bfnm{Hassan~S}\binits{H.~S.}} \AND
  \bauthor{\bsnm{Nasti{\'c}},~\bfnm{Aleksandar~S}\binits{A.~S.}}
(\byear{2009}).
\btitle{A new geometric first-order integer-valued autoregressive ({NGINAR}
  (1)) process}.
\bjournal{J. Statist. Plann. Inference}
\bvolume{139}
\bpages{2218--2226}.
\end{barticle}
\endbibitem

\bibitem[\protect\citeauthoryear{Schmidt and Pereira}{2011}]{sp11}
\begin{barticle}[author]
\bauthor{\bsnm{Schmidt},~\bfnm{Alexandra~M}\binits{A.~M.}} \AND
  \bauthor{\bsnm{Pereira},~\bfnm{Jo{\~a}o Batista~M}\binits{J.~B.~M.}}
(\byear{2011}).
\btitle{Modelling time series of counts in epidemiology}.
\bjournal{Int. Stat. Rev.}
\bvolume{79}
\bpages{48--69}.
\end{barticle}
\endbibitem

\bibitem[\protect\citeauthoryear{Schweer}{2016}]{schweer16}
\begin{barticle}[author]
\bauthor{\bsnm{Schweer},~\bfnm{Sebastian}\binits{S.}}
(\byear{2016}).
\btitle{A Goodness-of-Fit Test for Integer-Valued Autoregressive Processes}.
\bjournal{J. Time Ser. Anal.}
\bvolume{37}
\bpages{77--98}.
\end{barticle}
\endbibitem

\bibitem[\protect\citeauthoryear{Wei{\ss}}{2008}]{weib08}
\begin{barticle}[author]
\bauthor{\bsnm{Wei{\ss}},~\bfnm{Christian~H}\binits{C.~H.}}
(\byear{2008}).
\btitle{Thinning operations for modeling time series of counts---a survey}.
\bjournal{Adv. Stat. Anal.}
\bvolume{92}
\bpages{319--341}.
\end{barticle}
\endbibitem

\bibitem[\protect\citeauthoryear{Wei{\ss}}{2018}]{weib18}
\begin{barticle}[author]
\bauthor{\bsnm{Wei{\ss}},~\bfnm{Christian~H}\binits{C.~H.}}
(\byear{2018}).
\btitle{Goodness-of-fit testing of a count time series' marginal distribution}.
\bjournal{Metrika}
\bvolume{81}
\bpages{619--651}.
\end{barticle}
\endbibitem

\bibitem[\protect\citeauthoryear{Wei{\ss}, Homburg and Puig}{2019}]{whp19}
\begin{barticle}[author]
\bauthor{\bsnm{Wei{\ss}},~\bfnm{Christian~H}\binits{C.~H.}},
  \bauthor{\bsnm{Homburg},~\bfnm{Annika}\binits{A.}} \AND
  \bauthor{\bsnm{Puig},~\bfnm{Pedro}\binits{P.}}
(\byear{2019}).
\btitle{Testing for zero inflation and overdispersion in {INAR} (1) models}.
\bjournal{Stat. Pap.}
\bvolume{60}
\bpages{823--848}.
\end{barticle}
\endbibitem

\bibitem[\protect\citeauthoryear{Xu et~al.}{2023}]{Xuetal23}
\begin{bmisc}[author]
\bauthor{\bsnm{Xu},~\bfnm{X}\binits{X.}},
  \bauthor{\bsnm{.~Yijiong},~\bfnm{Z.}\binits{Z.}},
  \bauthor{\bsnm{Y.},~\bfnm{Chen.}\binits{C.}},
  \bauthor{\bsnm{Liu},~\bfnm{Y.}\binits{Y.}},
  \bauthor{\bsnm{Goto},~\bfnm{Y.}\binits{Y.}} \AND
  \bauthor{\bsnm{Taniguchi},~\bfnm{M.}\binits{M.}}
(\byear{2023}).
\btitle{Long-memory log-linear zero-inflated generalized {P}oisson
  autoregression for {C}ovid-19 pandemic modeling}.
\end{bmisc}
\endbibitem

\bibitem[\protect\citeauthoryear{Yang et~al.}{2019}]{yangetal19}
\begin{barticle}[author]
\bauthor{\bsnm{Yang},~\bfnm{Kai}\binits{K.}},
  \bauthor{\bsnm{Kang},~\bfnm{Yao}\binits{Y.}},
  \bauthor{\bsnm{Wang},~\bfnm{Dehui}\binits{D.}},
  \bauthor{\bsnm{Li},~\bfnm{Han}\binits{H.}} \AND
  \bauthor{\bsnm{Diao},~\bfnm{Yajing}\binits{Y.}}
(\byear{2019}).
\btitle{Modeling overdispersed or underdispersed count data with generalized
  {P}oisson integer-valued autoregressive processes}.
\bjournal{Metrika}
\bvolume{82}
\bpages{863--889}.
\end{barticle}
\endbibitem

\bibitem[\protect\citeauthoryear{Zhu and Joe}{2003}]{zj03}
\begin{barticle}[author]
\bauthor{\bsnm{Zhu},~\bfnm{Rong}\binits{R.}} \AND
  \bauthor{\bsnm{Joe},~\bfnm{Harry}\binits{H.}}
(\byear{2003}).
\btitle{A new type of discrete self-decomposability and its application to
  continuous-time {M}arkov processes for modeling count data time series}.
\bjournal{Stoch. Models}
\bvolume{19}
\bpages{235--254}.
\end{barticle}
\endbibitem

\end{thebibliography}

\end{document}